\documentclass[reqno,12pt]{amsart}
\usepackage{amsmath,amsthm,amssymb}
\usepackage{tikz}
\usetikzlibrary{arrows}
\usetikzlibrary{backgrounds, calc, positioning}

\date{}
\newcommand{\RR}{\mathbb{R}}

\setlength{\textwidth}{15truecm}
\setlength{\textheight}{24truecm}
\setlength{\oddsidemargin}{0cm}
\setlength{\evensidemargin}{0cm}
\setlength{\topmargin}{-45pt}

\newtheorem{theorem}{Theorem}[section]
\newtheorem{lemma}[theorem]{Lemma}
\newtheorem{corollary}[theorem]{Corollary}
\newtheorem{proposition}[theorem]{Proposition}

\newtheorem{remark}[theorem]{Remark}

\numberwithin{equation}{section}

\usepackage{tikz}
\usetikzlibrary{arrows}
\usepackage{tikz}
\usetikzlibrary{shapes,snakes}
\usetikzlibrary{backgrounds, calc, positioning}

 \begin{document}

\title{Remarks on curvature in the transportation metric}

\author{Bo'az~Klartag}
\address{School of Mathematical Sciences, Tel Aviv University, Tel Aviv 69978, Israel}
\email{klartagb@tau.ac.il}

\vspace{5mm}
\author{Alexander~V.~Kolesnikov}
\address{ Higher School of Economics, Moscow,  Russia}
\email{Sascha77@mail.ru}

\thanks{
The first named author was supported by a grant from the European Research Council.
The second named author was supported by the
 DFG project RO 1195/12-1.
The article was prepared within the framework of the Academic Fund Program at the National Research University Higher School of Economics (HSE) in 2017--2018 (grant No 17-01-0102) and by the Russian Academic Excellence Project ''5-100''.
}

\keywords{Monge-Amp{\`e}re equation, K\"ahler-Einstein equation, optimal transportation, Ricci and Bakry-{\'E}mery tensors,
hyperbolic spaces, affine hyperspheres, log-concave and Gaussian probability measures}

\begin{abstract}
According to a classical result of E.~Calabi any hyperbolic affine hypersphere endowed with its natural Hessian metric has a non-positive Ricci tensor.
The affine hyperspheres can be described as the level sets of solutions to the ``hyperbolic" toric K\"ahler-Einstein equation
$e^{\Phi} = \det D^2 \Phi$ on  proper convex cones. We prove a generalization of this theorem by showing that for every
 $\Phi$  solving  this equation on a proper convex domain $\Omega$ the corresponding metric measure space
$(D^2 \Phi, e^{\Phi}dx)$ has a non-positive Bakry-{\'E}mery tensor.
Modifying the Calabi computations we obtain this result by applying the tensorial maximum principle to the weighted Laplacian of the Bakry-{\'E}mery tensor. Our computations are carried out in a generalized framework adapted to the optimal transportation problem for arbitrary target and source measures.
For the optimal transportation of the log-concave probability measures we prove a third-order  uniform dimension-free apriori estimate
in the spirit of the second-order Caffarelli contraction theorem, which has numerous applications in probability theory.
\end{abstract}

\maketitle

\section{Introduction}

The toric K\"ahler-Einstein  equation
\begin{equation}
\label{HKE}
e^{-\alpha \Phi} = \det D^2 \Phi
\end{equation}
for a convex function $\Phi: \RR^n \rightarrow \RR$ is a real analog of the complex Monge-Amp{\`e}re equation,
 which is
instrumental in the theory of  K\"ahlerian manifolds. Here, $D^2 \Phi$ is the Hessian of the function $\Phi$.
The equation (\ref{HKE}) is also connected
to convex geometry.
According to the standard classification, we distinguish
the parabolic case $\alpha=0$, elliptic case $\alpha>0$, and hyperbolic case
$\alpha<0$. The reader should not be  confused by the fact that according to the standard PDE terminology,
equation (\ref{HKE}) is {\it always} (non-uniformly) elliptic.

Having in mind potential applications in analysis and probability we deal with a Monge-Amp{\`e}re
equation of a more general type
\begin{equation}
\label{MA}
e^{-V} = e^{-W(\nabla \Phi)} \det D^2 \Phi.
\end{equation}
When the functions $e^{-V}, e^{-W}$ are densities of finite (probability) measures, the equation (\ref{MA})
is  related to the optimal transportation problem (\cite{BoKo2012}, \cite{Vill})
and can be analyzed using  various functional-analytic and measure-theoretic methods.
The measures $\mu = e^{-V} \ dx$, $\nu=e^{-W} dx$ are called source and target measures respectively.
However, here we are interested in situations when the associated measures are not always finite.
An example is given by the hyperbolic case of ({\ref{HKE}), i.e. $\alpha <0$, where  $\mu=e^{-\alpha \Phi} dx$ and
$\nu$ is the Lebesgue measure  on the (typically unbounded) set $\nabla \Phi(\mathbb{R}^n)$.

In this paper we extend the celebrated Calabi approach for affine spheres and compute
the appropriate weighted Laplacian of various tensors.
It turns out that these tensors can be estimated with the help of the maximum principle.
These computations, yet elementary, are tedious and admit a non-trivial geometrical  interpretation.
We try to present them in the simplest but general form, sometimes using associated diagrams.
In our opinion one of the most natural objects to study is the so-called
metric measure space
$$
(h=D^2 \Phi,\mu),
 $$ i.e. the space $\mathbb{R}^n$ (or a subset of it) equipped with
the Hessian  metric $$h= \sum_{i,j=1}^n \Phi_{ij} dx^i dx^j$$ and the mesure $\mu= e^{-V} dx$.
The reader can find explanations justifying this viewpoint and motivating applications in
\cite{Fox},  \cite{K_moment}, \cite{K_part_I}, \cite{KlarKol}, \cite{kol}.
The corresponding second-order differential operator $L$
is the weighted Laplacian
$$
L = \Delta_h - \nabla_h P \cdot \nabla_h,
$$
where $\nabla_h$ is the Riemannian gradient, $\Delta_h$ is the Riemannian (Laplace-Beltrami) Laplacian, and  $P$ is the potential of $\mu$ with respect to the Riemannian volume:
$$e^{-V} = e^{-P} \sqrt{\det D^2 \Phi}.$$

Given a tensor $T$ of any type one can always compute
the corresponding weighted Laplacian
\begin{equation}
L T = \Phi^{pq} \Bigl( \nabla_p \nabla_q T - \nabla_p P \nabla_q  T \Bigr). \label{eq_new}
\end{equation}
Our  central technical result is an exact expression of $L T$
for several important tensors $T$
and  arbitrary measures $\mu$ and $\nu$.
Our computations imply, in particular, that for $\Phi$ solving (\ref{HKE}) the following differential inequality holds:
\begin{equation}\label{Lric}
L \bigl(\mbox{\rm{Ric}}_{\mu} \bigr)   \ge  4 \mbox{\rm{Ric}}_{\mu} \odot g,
 \end{equation}
where $\odot$ is  the symmetric product (see below), where
$$\mbox{\rm{Ric}}_{\mu} = \mbox{\rm{Ric}} + \nabla^2_h P$$
is the Bakry-{\'E}mery tensor of $(h,\mu)$, $\mbox{\rm{Ric}}$
is the standard Ricci tensor of $h$,
and
$$
g_{ij} = \Phi_{iab} \Phi_{j}^{ab}.
$$
The latter tensor was introduced by Calabi in \cite{Calabi1}. It consitutes the most substantial (non-negative) part of the Bakry-{\'E}mery tensor.
Applying the maximum principle we obtain from (\ref{Lric})
that the largest eigenvalue of $\mbox{\rm{Ric}}_{\mu}$ is non-positive at its  maximum point. Therefore:

 \begin{theorem}
 \label{mainth}
 Consider a proper, open convex domain $\Omega \subset \mathbb{R}^n$. Let $\alpha$ be a negative number and  $\Phi$ be the (unique) solution to (\ref{HKE})  on $\Omega$ satisfying
 $\lim_{x \to \partial \Omega} \Phi(x) = + \infty$. Then
 $$
 \mbox{\rm{Ric}}_{\mu} \le 0.
 $$
 \end{theorem}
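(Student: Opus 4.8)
The plan is to derive the theorem from the differential inequality (\ref{Lric}) by a Hamilton-type tensorial maximum principle for the weighted Laplacian $L$, applied to the Bakry--\'Emery tensor $\mathrm{Ric}_\mu$ over the Hessian manifold $(\Omega, h=D^2\Phi)$. Granting the computation behind (\ref{Lric}), the one genuine difficulty is that $(\Omega,h)$ is non-compact; here the boundary blow-up normalization $\lim_{x\to\partial\Omega}\Phi(x)=+\infty$ is essential, since it guarantees --- by the classical (Cheng--Yau type) description of solutions of the hyperbolic toric equation on proper convex domains --- that $(\Omega,h)$ is a \emph{complete} Riemannian manifold.

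For the core step, write $F(x)$ for the largest eigenvalue of $\mathrm{Ric}_\mu$ at $x$ with respect to $h(x)$; the assertion $\mathrm{Ric}_\mu\le 0$ is exactly $F\le 0$. Suppose $F$ attains a positive maximum at a point $p$ (that this is legitimate is the main point, discussed below). Choose an $h$-unit eigenvector $v$ at $p$ with $\mathrm{Ric}_\mu(v,v)=F(p)>0$, and extend it to a neighbourhood of $p$ by parallel transport along the geodesics of $h$ issuing from $p$; then $v$ stays $h$-unit, $\nabla v|_p=0$, and $\Phi^{pq}\nabla_p\nabla_q v|_p=0$ (polarize $\nabla_{\dot\gamma}\nabla_{\dot\gamma}v\equiv 0$). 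The scalar function $\psi(x)=\mathrm{Ric}_\mu(v(x),v(x))$ then satisfies $\psi\le F$ near $p$ with $\psi(p)=F(p)$, so $\psi$ has a local maximum at $p$, whence $L\psi(p)\le 0$. When $L\psi$ is expanded at $p$ using (\ref{eq_new}), every term involving a derivative of $v$ either vanishes (because $\nabla v|_p=0$) or is annihilated by the contraction with $\Phi^{pq}$ (because $\Phi^{pq}\nabla_p\nabla_q v|_p=0$), leaving $L\psi(p)=(L\,\mathrm{Ric}_\mu)(v,v)|_p$. On the other hand, contracting (\ref{Lric}) with $v\otimes v$ and using that $v$ realizes the top eigenvalue of $\mathrm{Ric}_\mu$ gives $(L\,\mathrm{Ric}_\mu)(v,v)|_p\ge 4\,F(p)\,g(v,v)|_p$. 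Since $g$ is positive semidefinite this already yields $F(p)\le 0$ unless $g(v,v)|_p=0$; and the degenerate case is disposed of directly, for $g(v,v)=0$ means $\Phi_{iab}v^i=0$ for every $a,b$, and then the exact expression for $\mathrm{Ric}_\mu$ --- whose terms beyond the drift part $\nabla^2_h P=\tfrac{\alpha}{2}\nabla^2_h\Phi$, and within the latter beyond $\tfrac{\alpha}{2}h$, all carry a factor $\Phi_{iab}$ --- collapses to $\mathrm{Ric}_\mu(v,v)=\tfrac{\alpha}{2}h(v,v)<0$, contradicting $\mathrm{Ric}_\mu(v,v)=F(p)>0$. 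Hence $F\le 0$.

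The main obstacle is precisely the step I postponed: showing that $F$ attains its supremum, i.e. running the above argument ``at infinity'' on the complete but non-compact space $(\Omega,h)$. I would resolve this by one of the following routes, in increasing order of robustness. (i) Use interior and boundary a priori estimates for $\Phi$ to prove $\limsup_{x\to\partial\Omega}F(x)\le 0$, so that a positive supremum of $F$ must be an interior maximum, to which the previous paragraph applies. (ii) An exhaustion argument: solve the equation on an increasing family of bounded convex subdomains with suitable boundary data, apply the ordinary maximum principle on each, and pass to the limit using interior regularity of $\Phi$. (iii) Invoke the Omori--Yau maximum principle for the weighted operator $L$: on a complete manifold carrying an a priori lower curvature bound of Calabi type it produces a sequence $x_k$ with $F(x_k)\to\sup F$ along which $L\psi_k(x_k)\le o(1)$ for the associated parallel eigenvectors $v_k$, while (\ref{Lric}) forces $L\psi_k(x_k)\ge 4\,F(x_k)\,g(v_k,v_k)$; if $\sup F>0$ (and likewise if $\sup F=+\infty$) this is impossible. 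In all cases it is this completeness/limiting ingredient, rather than the curvature identity (\ref{Lric}), that is the delicate point.
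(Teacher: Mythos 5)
Your proposal is correct and follows essentially the same route as the paper: the differential inequality (\ref{Lric}) (derived from Proposition \ref{lgij}), the pointwise tensorial maximum principle via a parallel-extended top eigenvector of $\mathrm{Ric}_{\mu}$, and the identity $\mathrm{Ric}_{\mu}=\frac{1}{4}g+\frac{\alpha}{2}h$ to handle the quantitative/degenerate part. The non-compactness issue you postpone is resolved in the paper exactly by your route (iii): Lemma \ref{bounded}, a Calabi--Osserman--Fox quantitative maximum principle ($Lu\ge Bu^{2}-Au$ for $u\ge 0$ on a complete weighted manifold with $\mathrm{Ric}_{\mu,N}$ bounded below implies $\sup u\le A/B$), applied to the non-negative function $\tilde{\lambda}=\lambda-\frac{\alpha}{2}$, with completeness supplied by Cheng--Yau and the curvature bound by $\mathrm{Ric}_{\mu,2n}\ge 0$.
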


 In the case when $\Omega$ is a proper convex cone, the unique solution $\Phi$ to (\ref{HKE}) must be logarithmically-homogeneous, i.e.,
 $\Phi(\lambda x) = \Phi(x) + 2 (n / \alpha) \log \lambda$ for any $\lambda > 0$.
 It can be easily verified (see Lemma 5.2) that every logarithmically-homogeneous $\Phi = 2P$  satisfies
 $$
 \nabla^2_h \Phi =  0.
 $$
 In particular, the Ricci and Bakry-{\'E}mery tensors coincide in this case and
  Theorem \ref{mainth}
immediately implies

\begin{corollary} (Calabi \cite{Calabi1}, see also Fox \cite{Fox}, Loftin \cite{Lof} and Sasaki \cite{Sas}).
\label{sph}
 Consider a proper open convex cone $\Omega \subset \mathbb{R}^n$. Let $\alpha$ be a negative number and  $\Phi$ be the (unique) solution to (\ref{HKE}) on $\Omega$   satisfying
 $\lim_{x \to \partial \Omega} \Phi(x) = + \infty$. Then
 $$
 \mbox{\rm{Ric}} \le 0.
 $$
 Moreover, the Ricci tensors of the level sets of $\Phi$ (hyperbolic affine spheres) endowed with the metric $h$ are non-positive.
\end{corollary}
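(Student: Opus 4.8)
The plan is to deduce the corollary from Theorem~\ref{mainth}, by observing that on a cone the Bakry-{\'E}mery tensor degenerates to the ordinary Ricci tensor of $h$, and then to descend from that ambient estimate to the level sets by a metric splitting argument. First I would record the logarithmic homogeneity of $\Phi$ announced just before the corollary: for $\lambda>0$ the function $\Phi_\lambda(x):=\Phi(\lambda x)-\tfrac{2n}{\alpha}\log\lambda$ satisfies $\det D^2\Phi_\lambda(x)=\lambda^{2n}\det D^2\Phi(\lambda x)$ and $e^{-\alpha\Phi_\lambda(x)}=\lambda^{2n}e^{-\alpha\Phi(\lambda x)}$, hence solves (\ref{HKE}) on $\Omega=\lambda\Omega$ with the same blow-up along $\partial\Omega=\partial(\lambda\Omega)$; by the uniqueness assumed in the statement, $\Phi_\lambda\equiv\Phi$, i.e. $\Phi(\lambda x)=\Phi(x)+\tfrac{2n}{\alpha}\log\lambda$. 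Differentiating this in $\lambda$ at $\lambda=1$, and then in $x$, gives $\langle x,\nabla\Phi(x)\rangle=\tfrac{2n}{\alpha}$ and $D^2\Phi(x)\,x=-\nabla\Phi(x)$, which I will use below. (Existence and uniqueness of $\Phi$ are classical, after Cheng--Yau, and I take them as given, as in the statement.)

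Next I would verify that $\mathrm{Ric}_\mu=\mathrm{Ric}$ here. Logarithmic homogeneity forces $\nabla^2_h\Phi=0$ --- this is Lemma~5.2, and it is also a short direct computation, since for a Hessian metric $\Gamma^k_{ij}=\tfrac12\Phi^{kl}\Phi_{ijl}$, so $(\nabla^2_h\Phi)_{ij}=\Phi_{ij}-\tfrac12\Phi^{kl}\Phi_k\Phi_{ijl}=\Phi_{ij}+\tfrac12 x^l\Phi_{ijl}$ by the second identity above, while differentiating $x^k\Phi_{ki}=-\Phi_i$ once more yields $x^l\Phi_{ijl}=-2\Phi_{ij}$. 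Moreover $V=\alpha\Phi$ by (\ref{HKE}), so the defining relation $e^{-V}=e^{-P}\sqrt{\det D^2\Phi}$ makes $P$ a constant multiple of $\Phi$, whence $\nabla^2_h P=0$ as well and $\mathrm{Ric}_\mu=\mathrm{Ric}+\nabla^2_h P=\mathrm{Ric}$. Since a proper open convex cone is in particular a proper open convex domain, Theorem~\ref{mainth} applies and gives $\mathrm{Ric}=\mathrm{Ric}_\mu\le 0$, which is the first assertion.

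For the level sets, the idea is that $\nabla^2_h\Phi=0$ turns $(\Omega,h)$ locally into a Riemannian product over a level set of $\Phi$. Indeed, $\nabla_h\Phi$ is parallel for the Levi-Civita connection of $h$ (this is precisely $\nabla^2_h\Phi=0$), and its length is constant: $|\nabla_h\Phi|_h^2=\Phi^{ij}\Phi_i\Phi_j=-\langle x,\nabla\Phi\rangle=-\tfrac{2n}{\alpha}>0$ by the two identities above. A parallel vector field of constant non-zero length has geodesic integral curves and an integrable orthogonal distribution whose leaves are totally geodesic; here those leaves are exactly the level sets $N_c=\{\Phi=c\}$, since $\nabla_h\Phi$ is $h$-normal to each of them. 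Hence $h$ is locally isometric to a product $(\mathbb{R},dt^2)\times(N_c,h|_{N_c})$; as the Ricci tensor of a Riemannian product is the direct sum of the Ricci tensors of the factors and the one-dimensional factor is flat, the Ricci tensor of $(N_c,h|_{N_c})$ is just the restriction of $\mathrm{Ric}$ to $TN_c$, which is $\le 0$ by the first part. I expect the only genuinely delicate step to be this last one --- extracting the local product structure of the cone over a hyperbolic affine sphere from the vanishing of $\nabla^2_h\Phi$; the remainder is routine bookkeeping around Theorem~\ref{mainth} and Lemma~5.2.
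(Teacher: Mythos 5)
Your argument is correct, and the first half (logarithmic homogeneity via uniqueness under scaling, $\nabla^2_h\Phi=0$, $P=\tfrac{\alpha}{2}\Phi$, hence $\mathrm{Ric}=\mathrm{Ric}_\mu\le 0$ by Theorem~\ref{mainth}) coincides with the paper's, which simply quotes Lemma~\ref{zerohess} for these facts; your scaling derivation of the homogeneity is a detail the paper asserts without proof, so it is a welcome addition. Where you diverge is the level-set statement. The paper invokes a Gauss-type identity for the Ricci tensor of a hypersurface, $\mathrm{Ric}_M=(\mathrm{Ric}-\mathrm{R}(\cdot,\eta,\cdot,\eta))|_{TM}+(Hh|_{TM}-II_M)II_M$, kills $II_M$ using $\nabla^2_h\Phi=0$, and then computes $\mathrm{R}(\cdot,x,\cdot,x)=0$ directly from the coordinate expression of the Riemann tensor of a Hessian metric together with $\Phi_{ijk}x^k=-2\Phi_{ij}$. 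You instead observe that $\nabla_h\Phi$ is a parallel field of constant nonzero length $(-2n/\alpha)^{1/2}$ and conclude a local Riemannian product splitting $(\mathbb{R},dt^2)\times(N_c,h|_{N_c})$, from which $\mathrm{Ric}_{N_c}=\mathrm{Ric}|_{TN_c}$ is immediate. The two routes are equivalent in substance — your splitting encodes both $II_M=0$ and the vanishing of the normal curvature term at once — but yours is more structural and avoids the coordinate computation and the external reference, at the cost of invoking the (standard, purely local) de Rham-type splitting for a parallel gradient field; the paper's version is more self-contained at the level of explicit formulas for Hessian metrics. Either way the conclusion $\mathrm{Ric}_{N_c}\le 0$ follows, and I see no gap.
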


Using the same approach we prove a third-order analog of the so-called contraction theorem (L.~Caffarelli).
According to this theorem every (see more general statement in Section 5) optimal transportation mapping $\nabla \Phi$
pushing forward the standard Gaussian measure $\mu = \frac{1}{(2 \pi)^{\frac{n}{2}}} e^{-\frac{|x|^2}{2}} dx$ onto
a probability measure $\nu = e^{-W} dx$ is a contraction provided $D^2 W \ge \mbox{\rm{Id}}$.
This result is important in probability theory because it implies that the isoperimetric properties of $\mu$
are at least as strong as the isoperimetric properties  of $\gamma$.
Contractivity of the optimal mappings
corresponds to the following uniform estimate
$$
D^2 \Phi(x) \le \mbox{\rm{Id}}, \ \forall x \in  \mathbb{R}^n.
$$
We prove that a similar estimate (global, dimension free, uniform) for third-order derivatives of $\Phi$
holds under certain  natural assumptions on the second and third-order derivatives of $V$ and $W$.

     \section{Notations, definitions, and previously known results}
\label{sec2}
It will be assumed throughout that we are given a smooth (at least $C^5$) convex function $\Phi$ on $\mathbb{R}^n$
such that its gradient $\nabla \Phi$
pushes forward the measure
$$
\mu = e^{-V} dx
$$
onto the measure
$$
\nu = e^{-W} dx.
$$
The potentials $V, W$ are assumed to be sufficiently regular (at least $C^3$).
Equivalently, $\Phi$ solves the corresponding Monge-Amp{\'e}re  equation (\ref{MA}).

Sometimes we assume that $\mu$ is supported on a convex  domain  $\Omega$.
The domain $\Omega$ is called proper if it does not contain a full line.

The function $\Phi$ can arise as a solution to any of the following problems:
\begin{enumerate}
\item
Optimal transportation problem: Given probability measures $\mu, \nu$
find the (unique up to a set of $\mu$-measure zero) function $\Phi$ solving (\ref{MA}). We refer to \cite{BoKo2012}, \cite{Vill},
where the reader can find comprehensive information about the solvability, uniqueness, and regularity issues.
\item
Given a probability measure $\nu$ satisfying $\int x d \nu=0$ find a solution
to the elliptic K\"ahler-Einstein equation
$$
e^{-\Phi} = e^{-W(\nabla \Phi)} \det D^2 \Phi.
$$
The existence and uniqueness results are  presented in \cite{BB, CoKl, WZ}.
\item
Given a proper open convex domain $\Omega \subset \mathbb{R}^n$ find a solution
to the hyperbolic K\"ahler-Einstein equation
\begin{equation} \label{KE-hyp}
e^{\Phi} = \det D^2 \Phi.
\end{equation}
on $\Omega$ which satisfies  $\lim_{x \to \partial \Omega} \Phi(x) = + \infty$.
\end{enumerate}

The well-posedness of (\ref{KE-hyp}) was proved by Cheng and Yau, who
continued the investigations of Calabi and Nirenberg. The formulation below is taken from \cite{Fox}.
\begin{theorem}(S.Y. Cheng, S.T. Yau, \cite{CY1}, \cite{CY2}, \cite{CY3})
\label{CYth}
For every proper open convex domain $\Omega \subset \mathbb{R}^n$ there exists a unique convex function $\Phi$ solving
(\ref{KE-hyp}) and satisfying $\lim_{x \to \partial \Omega} \Phi(x) = + \infty$.
The Riemannian metric $
h = D^2 \Phi
 $
 is complete on $\Omega$.
\end{theorem}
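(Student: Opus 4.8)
The plan is the classical Cheng--Yau strategy: realise $\Phi$ as a monotone limit of solutions of Dirichlet problems on compactly contained convex subdomains, use a comparison principle to control the limit, and read off the boundary asymptotics from barriers to obtain uniqueness and completeness. First I would treat bounded $\Omega$. For each $k\in\mathbb N$, standard Monge--Amp\`ere theory (the continuity method --- the zeroth-order term $-e^{\Phi}$ makes the linearised operator invertible --- together with Caffarelli--Krylov regularity) produces a unique convex $\Phi_k$ solving $\det D^2\Phi_k=e^{\Phi_k}$ in $\Omega$ with $\Phi_k=k$ on $\partial\Omega$. The operator $u\mapsto\det D^2u-e^{u}$ obeys a comparison principle: at an interior maximum of $u-v$ between two solutions one has $0<D^2u\le D^2v$, hence $e^{u}=\det D^2u\le\det D^2v=e^{v}$ there; the same argument bounds any solution on a bounded convex $\Omega''$ above by any supersolution on $\Omega''$ (that is, $\det D^2\Psi\le e^{\Psi}$) which blows up on $\partial\Omega''$. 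Thus $\Phi_k\le\Phi_{k+1}$, while for a smooth bounded convex $\Omega''$ with strictly concave defining function $g<0$ the function $-(n+1)\log(-g)+b$ is, for $b$ large, such a supersolution; taking $\Omega'\Subset\Omega''\Subset\Omega$ we get $\sup_k\sup_{\Omega'}\Phi_k<\infty$ for every $\Omega'\Subset\Omega$. Hence $\Phi:=\lim_k\Phi_k$ is finite and convex in $\Omega$ with $\lim_{x\to\partial\Omega}\Phi(x)=+\infty$ (since $\Phi\ge\Phi_k=k$ near $\partial\Omega$). For general proper convex $\Omega$ I would exhaust it by bounded convex $\Omega_j\nearrow\Omega$ and pass to the limit using monotonicity in $j$; that the limit stays finite is exactly where properness (no full line in $\Omega$) is used.

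\emph{Regularity.} To know $\Phi$ is a genuine $C^{\infty}$ solution one needs locally uniform interior $C^{2,\alpha}$ bounds, independent of $k$. The device is to work on sublevel sets: for $c$ large $\Omega_c:=\{\Phi<c\}\Subset\Omega$ and $\Phi-c$ vanishes on $\partial\Omega_c$, so a Pogorelov-type interior second-derivative estimate bounds $D^2\Phi$ from above on $\Omega_{c-1}$ in terms of $c$ and the geometry of $\Omega_c$, while $\det D^2\Phi=e^{\Phi}>0$ forces a lower bound on $D^2\Phi$; hence the linearised equation is uniformly elliptic on each fixed compact subset, and Evans--Krylov together with Schauder theory bootstrap to interior $C^{\infty}$ bounds, a compactness argument justifying the passage to the limit. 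Making this Pogorelov estimate robust as the boundary data tends to $+\infty$, together with the boundary barriers below, is the main technical obstacle, and it is here that convexity of $\Omega$ is used essentially.

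\emph{Uniqueness.} If $\Phi_1,\Phi_2$ both solve (\ref{KE-hyp}) with boundary blow-up, the interior comparison reduces matters to showing that any two such solutions have the same boundary behaviour up to an additive constant: then $\Phi_1-\Phi_2$ is bounded with $\limsup_{x\to\partial\Omega}(\Phi_1-\Phi_2)(x)\le 0$, and symmetrically, which forces $\Phi_1=\Phi_2$. I would obtain the boundary asymptotics by sandwiching $\Phi$, near each boundary point $p$, between explicit sub- and super-solutions built from the distance to a supporting hyperplane of $\Omega$ at $p$ and from a local defining function of $\Omega$, calibrated using the scaling invariance $\Phi(x)\mapsto\Phi(\lambda x)+2n\log\lambda$ of the equation; this pins $\Phi$ down to leading order near $\partial\Omega$.

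\emph{Completeness.} The same barriers yield $D^2\Phi(\nu,\nu)\gtrsim t^{-2}$ in the inward normal direction $\nu$, where $t=\mathrm{dist}(x,\partial\Omega)$, so the $h$-length of any curve approaching $\partial\Omega$ is at least a constant multiple of $\int_0 dt/t=+\infty$. Hence no rectifiable curve reaches $\partial\Omega$ and $(\Omega,h)$ is metrically complete, which finishes the proof.
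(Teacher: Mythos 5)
You should first be aware that the paper offers no proof of this statement: it is imported as a black box from the three Cheng--Yau papers (with the formulation taken from Fox), so there is nothing internal to compare your argument against. Your outline does reproduce the genuine Cheng--Yau strategy --- Dirichlet approximations $\Phi_k=k$ on $\partial\Omega$, monotone limits, comparison with logarithmic barriers, Pogorelov/Evans--Krylov interior estimates, and completeness from the normal blow-up of $D^2\Phi$ --- and the comparison computation at an interior maximum and the scaling law $\Phi\mapsto\Phi(\lambda\,\cdot)+2n\log\lambda$ are correct. As a roadmap it is the right one.

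As a proof, however, it has concrete gaps beyond the ones you flag. (i) The barrier is mis-stated: for a convex $\Omega''$ you need a \emph{convex} defining function $\rho<0$, so that $D^2\bigl(-(n+1)\log(-\rho)\bigr)=\tfrac{n+1}{\rho^2}\,\nabla\rho\otimes\nabla\rho-\tfrac{n+1}{\rho}\,D^2\rho>0$; with a strictly concave $g$ the Hessian of $-(n+1)\log(-g)$ has a negative semidefinite second term and $\{g<0\}$ is not even convex. (ii) For unbounded proper $\Omega$ the exhaustion produces a \emph{decreasing} sequence of solutions, so the whole difficulty is the lower bound, i.e., exhibiting a subsolution (or a priori lower barrier) that prevents the limit from being $-\infty$; saying ``this is where properness is used'' names the hypothesis but not the construction, which in Cheng--Yau goes through the cone case and its characteristic function. (iii) Uniqueness: two blow-up solutions need not have comparable boundary asymptotics a priori, and $\sup(\Phi_1-\Phi_2)$ need not be attained; the standard repair is the dilation trick --- compare $\Phi_1$ with $\Phi_2\bigl(x_0+(1+\varepsilon)(x-x_0)\bigr)+2n\log(1+\varepsilon)$, which solves the equation on a shrunken copy of $\Omega$ and blows up on its boundary, then let $\varepsilon\to 0$ --- rather than the delicate pointwise asymptotics you propose. (iv) Completeness for unbounded $\Omega$ also requires that curves escaping to infinity \emph{inside} $\Omega$ have infinite $h$-length (e.g., along a ray in a cone one has $h(\dot\gamma,\dot\gamma)\sim t^{-2}$ by logarithmic homogeneity), not only curves approaching $\partial\Omega$. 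These four points are precisely where the cited papers do their work, so the proposal should be regarded as a correct plan rather than a proof.
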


We assume throughout this paper that we are given the standard Euclidean coordinate system  $\{x^i\}$.
The interior of  $\Omega = \mbox{supp}(\mu)$ is equipped with the metric
$$
h = h_{ij} dx^i dx^j = \Phi_{ij} dx^i dx^j = (\partial^2_{x_i x_j} \Phi) dx^i dx^j
$$
and with the measure $\mu$.
The Legendre transform
$$
\Psi(y) = \sup_{x \in \Omega} \bigl( \langle x, y \rangle - \Phi(x) \bigr)
$$
defines the dual convex potential $\Psi$, satisfying $\nabla \Phi \circ \nabla \Psi(y) =y$ and pushing forward  $\nu$ onto $\mu$.

We give below a list of useful computational formulas, the reader can find the proofs in \cite{kol}.
 It is convenient to use the following notation:
 $$
 V_i = \partial_{x_i} V, \ V_{ij} = \partial^2_{x_i x_j} V, \ V_{ijk} = \partial^3_{x_i x_j x_k} V
 $$
  $$
 W^i = (\partial_{x_i} W) \circ \nabla \Phi, \ W^{ij} = (\partial^2_{x_i x_j} W)\circ \nabla \Phi, \ W^{ijk} = (\partial^3_{x_i x_j x_k} W)\circ \nabla \Phi.
 $$
 We follow the standard conventions in  Riemannian geometry (i.e., $\Phi^{ij}$ is inverse to $\Phi_{ij}$, Einstein summation, raising indices etc.).
 The measure $\mu$ has the following density with respect to the Riemannian volume
 $$
 \mu = e^{-P} d vol_{g}, \ \ P = \frac{1}{2} \bigl( V + W(\nabla \Phi) \bigr).
 $$

 The associated diffusion generator (weighted Laplacian) $L$ has the form
  $$
L f = \Phi^{ij} f_{ij} - W^i f_i = \Delta_h f - \frac{1}{2}(V^i + W^i) f_i,
 $$
 where $\Delta_h$ is the Riemannian Laplacian.
 The following non-negative symmetric tensor $g$  plays prominent role in our analysis
 $$
 g_{ij} = \Phi_{iab} \Phi_j^{ab}.
 $$

  In order to distinguish  the (weighted) Laplacian of a tensor $T$ and the Laplacian
  of its component in the fixed Euclidean coordinate system we
 use for the latter the  square brackets.
 For instance $(L T)_{ij}$ will denote the Laplacian of the $(0,2)$-tensor $T$
 as in (\ref{eq_new}) while  $$L[T_{ij}]$$ denotes the Laplacian of the scalar function $T_{ij}$ with fixed indices $i,j$.
  The proof of the following lemma can be found in \cite{kol}.

  \begin{lemma}
 \label{Lpd}
   The weighted Laplacians of the partial derivatives of $\Phi$  for fixed $i,j,k$ satisfy the following relations:
   \begin{equation}
   \label{d1}
  L [\Phi_i] = - V_i =  -W_i + \Phi^{kl} \Phi_{ikl}.
  \end{equation}
  \begin{equation}
  \label{d2}
  L [\Phi_{ij}] = -V_{ij} + W_{ij} + g_{ij}
  \end{equation}
  \begin{align}
\label{d3}
L [\Phi_{ijk}] = & - V_{ijk} + W_{ijk}
+ \bigl(  W^{s}_{i} \Phi_{sjk} + W^{s}_{j} \Phi_{sik} + W^{s}_{k} \Phi_{sij} \bigr)
\\& \nonumber
+ \bigl(\Phi_{abi} \Phi^{ab}_{jk} + \Phi_{abj} \Phi^{ab}_{ik} +  \Phi_{abk} \Phi^{ab}_{ij} \bigr) - 2 \Phi^{a}_{bi} \Phi^{b}_{cj} \Phi^{c}_{ak}.
\end{align}
 \end{lemma}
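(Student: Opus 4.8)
The plan is to obtain the three identities by differentiating the Monge--Amp\`ere equation (\ref{MA}) once, twice and three times in the fixed Euclidean coordinates and, at each step, reorganizing the outcome into the drift--diffusion form of the operator $L$. The starting point is the logarithmic form of (\ref{MA}),
$$
V = W(\nabla \Phi) - \log \det D^2 \Phi \qquad \text{on } \Omega,
$$
an identity between smooth functions on $\Omega$. Three elementary rules are applied repeatedly: for the Hessian matrix, $\partial_k \log \det D^2 \Phi = \Phi^{ab}\Phi_{abk}$ and $\partial_k \Phi^{ab} = -\Phi^{ap}\Phi^{bq}\Phi_{pqk}$; for the composition $W(\nabla \Phi)$, the chain rule gives $\partial_k\bigl(W(\nabla \Phi)\bigr) = W^s\Phi_{sk}$, $\partial_l W^s = W^{st}\Phi_{tl}$ and $\partial_m W^{st} = W^{stu}\Phi_{um}$; and all indices are raised and lowered with $h = D^2\Phi$, so that, using the total symmetry of $\Phi_{abc}$, one has $\Phi^{ac}\Phi^{bd}\Phi_{abi}\Phi_{cdj} = \Phi_{iab}\Phi_j^{ab} = g_{ij}$.

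Differentiating the displayed identity once in $x_i$ gives $V_i = W^s\Phi_{si} - \Phi^{kl}\Phi_{ikl}$, hence $L[\Phi_i] = \Phi^{kl}\Phi_{ikl} - W^s\Phi_{si} = -V_i$, which is (\ref{d1}) (the symbol $W_i$ there standing for $W^s\Phi_{si} = \partial_{x_i}\bigl(W(\nabla\Phi)\bigr)$). Differentiating once more in $x_j$, using $\partial_j\Phi^{ab}$ and the chain rule for $W^s$, gives
$$
V_{ij} = W^{st}\Phi_{si}\Phi_{tj} + W^s\Phi_{sij} + g_{ij} - \Phi^{ab}\Phi_{abij}.
$$
Since $L[\Phi_{ij}] = \Phi^{ab}\Phi_{abij} - W^s\Phi_{sij}$, solving the last identity for $\Phi^{ab}\Phi_{abij}$ and subtracting the drift term $W^s\Phi_{sij}$ yields $L[\Phi_{ij}] = -V_{ij} + W^{st}\Phi_{si}\Phi_{tj} + g_{ij}$, that is, (\ref{d2}), where $W_{ij} := W^{st}\Phi_{si}\Phi_{tj}$ is the pullback under $\nabla\Phi$ of the Hessian of $W$.

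For (\ref{d3}) I would differentiate the last display one further time in $x_k$ and sort the outcome into five groups. Differentiating $W^{st}\Phi_{si}\Phi_{tj}$ produces $W_{ijk} := W^{stu}\Phi_{si}\Phi_{tj}\Phi_{uk}$ together with $W^{s}_{i}\Phi_{sjk} + W^{s}_{j}\Phi_{sik}$, where $W^{s}_{i} := W^{st}\Phi_{ti}$; differentiating $W^s\Phi_{sij}$ produces $W^{s}_{k}\Phi_{sij} + W^s\Phi_{sijk}$; differentiating $g_{ij} = \Phi^{ac}\Phi^{bd}\Phi_{abi}\Phi_{cdj}$ produces, when the extra derivative lands on one of the two third-derivative factors, the pair $\Phi_{abi}\Phi^{ab}_{jk} + \Phi_{abj}\Phi^{ab}_{ik}$, and, when it lands on one of the two inverse factors, two equal contractions which---after raising indices and using the total symmetry of $\Phi_{abc}$---together equal $-2\,\Phi^{a}_{bi}\Phi^{b}_{cj}\Phi^{c}_{ak}$; and differentiating $-\Phi^{ab}\Phi_{abij}$ produces $\Phi_{abk}\Phi^{ab}_{ij} - \Phi^{ab}\Phi_{abijk}$. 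Collecting these contributions gives an identity for $V_{ijk}$; solving it for $\Phi^{ab}\Phi_{abijk}$ and substituting into $L[\Phi_{ijk}] = \Phi^{ab}\Phi_{abijk} - W^s\Phi_{sijk}$---at which point the term $W^s\Phi_{sijk}$ cancels---yields precisely (\ref{d3}).

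The one genuinely delicate point is the bookkeeping in this last differentiation. One must verify that the two ``top-order'' terms $W^s\Phi_{sijk}$ and $\Phi^{ab}\Phi_{abijk}$ created by the chain rule are precisely the drift and diffusion parts of $L[\Phi_{ijk}]$, so that no term of order five in $\Phi$ survives; that the three-term sums come out correctly symmetrized in $i,j,k$; and---most importantly---that the two inverse-matrix derivatives hidden in $\partial_k g_{ij}$ contribute the cubic term with precisely the coefficient $-2$. Everything else is routine. An invariant rephrasing in terms of the Levi-Civita connection of $h$, whose Christoffel symbols are $\Gamma^c_{ab} = \tfrac12\Phi^{cd}\Phi_{abd}$---so that $g_{ij}$ and the cubic term are, respectively, contractions of $\Gamma\otimes\Gamma$ and $\Gamma\otimes\Gamma\otimes\Gamma$---makes the cancellations transparent but is not needed for the computation itself.
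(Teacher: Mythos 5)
Your proof is correct: differentiating $V = W(\nabla\Phi) - \log\det D^2\Phi$ one, two and three times and recombining with the drift term is exactly the standard derivation, and it is the one underlying the paper itself — the lemma is quoted there from \cite{kol}, and the once- and twice-differentiated identities you obtain are precisely the ``loop elimination'' relations $\Phi^j_{ji} = -V_i + W_i$ and $\Phi^k_{ijk} = -V_{ij} + W_{ij} + \Phi^{\ell}_{ik}\Phi^k_{j\ell} + \Phi^k_{ij}W_k$ recorded in Section 4. Your bookkeeping of the delicate points (the cancellation of $W^s\Phi_{sijk}$, the identification $W_{ij} = W^{st}\Phi_{si}\Phi_{tj}$, and the coefficient $-2$ from the two derivatives of the inverse metric inside $g_{ij}$) all checks out.
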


\medskip Recall that for  two tensors $T_{ij}, S_{ij}$, their symmetric product is defined as follows:
 $$
 (T \odot S)_{ij} = \frac{1}{2} \bigl( T_{ik} S^{k}_{j} + T_{jk} S^{k}_{i} \bigr).
 $$
Finally, we give a list  of formulas for the most important quantities.

 \begin{enumerate}
\item Connection
 $$
 \Gamma^k_{ij} = \frac{1}{2} \Phi^k_{ij}.
 $$
 \item Hessian of $f$
 $$
 \nabla^2_h f_{ij} = f_{ij} - \frac{1}{2} \Phi^k_{ij} f_k.
 $$
\item  Riemann tensor
 $$
\mbox{\rm{R}}_{ikjl} = \frac{1}{4} (\Phi_{ila} \Phi^{a}_{kj} - \Phi_{ija} \Phi^{a}_{kl}).
$$
\item Ricci tensor
$$
\mbox{Ric}_{ij} = \frac{1}{4}  \Bigl( \Phi_{iab} \Phi^{ab}_j + \Phi_{ijk} (V^{k}-W^k) \Bigr) = \frac{1}{4} (g_{ij} + \Phi_{ijk}(V^{k}-W^k) ).
$$
\item Bakry-Emery tensor
$$
(\mbox{Ric}_{\mu})_{i,j}  = \mbox{Ric}_{ij}  + \frac{1}{2} \nabla^2_h (V + W(\nabla \Phi))_{ij}=  \frac{1}{4} g_{ij} + \frac{1}{2} V_{ij} +  \frac{1}{2} W_{ij}.
$$
\end{enumerate}

  \section{Laplacians for tensors}

  This section is devoted to computations of the weighted Laplacian of several important tensors.
  We stress that in this section {\bf we omit the subscript $h$} for the sake of simplicity, i.e.
  the symbols $\nabla, \nabla^2$ etc. are always related to the Hessian metric $h$, but not to the Euclidean metric.
  Given a tensor $T$ we define its Laplacian as follows:
  $$
  \Delta T = \Phi^{pq} \nabla_p \nabla_q T.
  $$
   Here $\nabla_p T$ is the covariant derivative, which means, in particular, that
   $$
   \nabla_p \Phi_{ij}=0, \ \Delta \Phi_{ij}=0.
   $$
   Similarly
   $$
   LT = \Delta T - \frac{1}{2} (V^k + W^k) \nabla_k T.
   $$
    \begin{lemma}
Let
$$f_i = \partial_{x_i} f$$
for some function $f$. Then
   $$
\nabla_p f_i = f_{ip} - \frac{1}{2} \Phi_{ip}^k f_k
$$
$$
\nabla_q \nabla_p f_i
= \bigl( f_{ip} - \frac{1}{2} \Phi_{ip}^k f_k \bigr)_q - \frac{1}{2} \Phi^m_{qi} \bigl(f_{mp} - \frac{1}{2} \Phi_{mp}^k f_k \bigr) - \frac{1}{2} \Phi^m_{qp} \bigl(f_{mi} - \frac{1}{2} \Phi_{mi}^k f_k\bigr)
$$
Taking the trace we get
$$
\Delta f_i = \Phi^{pq}\bigl( f_{ip} - \frac{1}{2} \Phi_{ip}^k f_k\bigr)_q - \frac{1}{2} \Phi^{mp}_{i} \bigl(f_{mp} - \frac{1}{2} \Phi_{mp}^k f_k\bigr) +  \frac{1}{2}(V^m - W^m) \bigl(f_{mi} - \frac{1}{2} \Phi_{mi}^k f_k\bigr)
$$
Rearranging the terms we finally obtain using Lemma \ref{Lpd}
\begin{align*}
L f_i  &= \Phi^{pq}\bigl(  f_{ipq} - \frac{1}{2} \Phi_{ipq}^k f_k + \frac{1}{2} \Phi^{km}_{q} \Phi_{imp} f_k - \frac{1}{2} \Phi_{ip}^k f_{kq} \bigr)
 - \frac{1}{2} \Phi^{mp}_i f_{mp}+ \frac{1}{4} g_{i}^k f_k - W^m (f_{mi} - \frac{1}{2} \Phi_{mi}^k f_k)
 \\& =  L[f_i] +  W^k f_{ik} - \frac{1}{2}(  L[\Phi_{ik}] + W^p \Phi_{ikp}) f^k - \Phi_{i}^{mk} f_{mk} + \frac{3}{4} g_{i}^k f_k - W^m f_{mi}  +  \frac{1}{2} W^m \Phi_{mi}^k f_k
 \\&
=     L[f_i]  -  \frac{1}{2} L[\Phi_{ik}] f^k  + \frac{3}{4} g_{i}^k f_k - \Phi_{i}^{mk} f_{mk} =    L[f_i]   - \Phi_{i}^{mk} f_{mk} +  \frac{1}{2} (V_{ik} - W_{ik}) f^k +  \frac{1}{4} g_{i}^k f_k  .
\\& = \Phi^{mk} f_{imk}  - \Phi_{i}^{mk} f_{mk}  - W_{k} f^{k}_{i} + \frac{1}{2} (V_{ik} - W_{ik}) f^k +  \frac{1}{4} g_{i}^k f_k
\end{align*}
   \end{lemma}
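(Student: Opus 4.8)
The plan is to peel off the three displayed identities one at a time, using nothing beyond the Christoffel symbols $\Gamma^k_{ij}=\tfrac12\Phi^k_{ij}$ and Lemma~\ref{Lpd}. Since $df=f_i\,dx^i$ is a one-form, its covariant derivative is $\nabla_p f_i=\partial_p f_i-\Gamma^k_{pi}f_k=f_{ip}-\tfrac12\Phi^k_{ip}f_k$, which is exactly the Hessian $\nabla^2_h f$ already recorded in Section~\ref{sec2}. Regarding $T_{pi}:=\nabla_p f_i$ as a $(0,2)$-tensor and differentiating once more, $\nabla_q\nabla_p f_i=\partial_q T_{pi}-\Gamma^m_{qp}T_{mi}-\Gamma^m_{qi}T_{pm}$; substituting $\Gamma=\tfrac12\Phi^{\bullet}_{\bullet\bullet}$ and the formula for $T$ reproduces the second display verbatim.

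Next I would contract with $\Phi^{pq}$ to form $\Delta f_i=\Phi^{pq}\nabla_q\nabla_p f_i$. Only two contractions are not purely formal: $\Phi^{pq}\Phi^m_{qi}=\Phi^{mp}_i$ (raising an index), and the ``trace of the third derivative'' $\Phi^{pq}\Phi^m_{qp}$. For the latter I would differentiate the Monge--Amp\`ere equation~\eqref{MA}, or equivalently simply read off the first identity~\eqref{d1} of Lemma~\ref{Lpd}, which packages Jacobi's formula for $\partial\log\det D^2\Phi$; this gives $\Phi^{pq}\Phi_{pqa}=W_a-V_a$ and hence $\Phi^{pq}\Phi^m_{qp}=W^m-V^m$, and the ensuing sign is precisely what turns the coefficient into $\tfrac12(V^m-W^m)$ in the third display.

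It then remains to assemble $L f_i=\Delta f_i-\tfrac12(V^k+W^k)\nabla_k f_i$ with $\nabla_k f_i=f_{ik}-\tfrac12\Phi^m_{ik}f_m$, expand, and collect. The purely scalar part $\Phi^{pq}f_{ipq}$ together with its correction terms reassembles into $L[f_i]$; the cubic-in-$\Phi_{\bullet\bullet\bullet}$ terms are eliminated by feeding $\Phi^{kl}\Phi_{ikl}$ into~\eqref{d1} and $L[\Phi_{ik}]=-V_{ik}+W_{ik}+g_{ik}$ into~\eqref{d2}; the several $W$-gradient terms cancel in pairs; and the various contributions to $g^k_i f_k$ combine to the single coefficient $\tfrac14$. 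The outcome is the advertised closed form $L f_i=\Phi^{mk}f_{imk}-\Phi^{mk}_i f_{mk}-W_k f^k_i+\tfrac12(V_{ik}-W_{ik})f^k+\tfrac14 g^k_i f_k$.

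The only genuine difficulty is \emph{bookkeeping}: there are several distinct contractions of the shape $\Phi^{pq}\Phi_{\bullet pq}$, numerous factors of $\tfrac12$ inherited from $\Gamma=\tfrac12\Phi^{\bullet}_{\bullet\bullet}$, and one must keep straight the raising/lowering conventions that tie $V^m$, $W^m$, $W_{ik}$ to the transport map $\nabla\Phi$ (e.g.\ $W_i=W^p\Phi_{ip}$). Once~\eqref{d1}--\eqref{d2} are used to trade third derivatives of $\Phi$ for second derivatives of $V$ and $W$ and the Calabi tensor $g$, the identity falls out by direct cancellation; there is no conceptual obstacle beyond the care demanded by the index gymnastics.
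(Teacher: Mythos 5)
Your proposal is correct and follows essentially the same route as the paper: the lemma's own displayed computation is precisely the covariant derivative of the one-form $df$ via $\Gamma^k_{ij}=\tfrac12\Phi^k_{ij}$, the second covariant derivative by the standard $(0,2)$-tensor rule, the trace using $\Phi^{pq}\Phi^m_{qp}=W^m-V^m$ from the differentiated Monge--Amp\`ere equation, and the final simplification via (\ref{d1})--(\ref{d2}). Nothing in your outline deviates from or omits a step of the paper's argument.
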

   \begin{corollary}
$$
L \Phi_i =  \frac{1}{2} (V_{ik} - W_{ik} )\Phi^k + \frac{1}{4} g_{i}^k \Phi_k -  W_{i}.
$$
\end{corollary}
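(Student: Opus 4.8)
The plan is to obtain the Corollary as the special case $f = \Phi$ of the Lemma just proved. Its last line states, for an arbitrary smooth function $f$,
\[
L f_i = \Phi^{mk} f_{imk} - \Phi_i^{mk} f_{mk} - W_k f^k_i + \frac{1}{2}(V_{ik} - W_{ik}) f^k + \frac{1}{4} g_i^k f_k .
\]
First I would substitute $f = \Phi$, so that $f_i = \Phi_i$, $f_{mk} = \Phi_{mk}$, $f_{imk} = \Phi_{imk}$, $f_k = \Phi_k$ and $f^k = \Phi^{kl}\Phi_l = \Phi^k$; this already reproduces the last two terms of the desired identity verbatim.

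The only step requiring a word of care is the collapse of the first three terms. Since $\Phi^{ij}$ is by definition the inverse matrix of $\Phi_{ij}$, one has $f^k_i = \Phi^{km}\Phi_{mi} = \delta^k_i$, so the third term becomes $-W_k \delta^k_i = -W_i$. For the first two, raising a pair of indices with the metric gives
\[
\Phi_i^{mk} f_{mk} = \Phi^{ma}\Phi^{kb}\Phi_{iab}\,\Phi_{mk} = \Phi^{kb}\Phi_{ikb} = \Phi^{mk}\Phi_{imk} ,
\]
so they cancel identically. What remains is exactly
\[
L\Phi_i = -W_i + \frac{1}{2}(V_{ik} - W_{ik})\Phi^k + \frac{1}{4} g_i^k \Phi_k ,
\]
which is the assertion of the Corollary.

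I do not expect any genuine obstacle: all the substance is already in the Lemma, and the Corollary is a two-line algebraic reduction whose only subtlety is the index bookkeeping in the identities $\Phi^{km}\Phi_{mi} = \delta^k_i$ and $\Phi^{mk}\Phi_{imk} = \Phi_i^{mk}\Phi_{mk}$, both immediate from the convention that indices are raised and lowered with $h = D^2\Phi$. As a consistency check one may observe that the answer is genuinely different from $L[\Phi_i] = -V_i$ in Lemma~\ref{Lpd}: the left-hand side here is the covariant (weighted) Laplacian of the one-form $d\Phi$, whereas $L[\Phi_i]$ is the scalar Laplacian of a fixed component, and the two differ by the connection terms recorded in the first displayed formula of the Lemma, $\nabla_p f_i = f_{ip} - \tfrac{1}{2} \Phi_{ip}^k f_k$.
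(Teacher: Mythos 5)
Your proposal is correct and is exactly the argument the paper intends: the Corollary is the specialization $f=\Phi$ of the preceding Lemma, and your index bookkeeping (the cancellation $\Phi^{mk}\Phi_{imk}=\Phi_i^{mk}\Phi_{mk}$ and the identity $-W_k f^k_i=-W_k\delta^k_i=-W_i$) is right. No gaps.
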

   All of the following calculations are essentially based on the next Lemma, which is obtained by direct computations with the help of Lemma \ref{Lpd}. The computation is long and quite standard.
   See Section 4 for a graphical method for performing this computation relatively quickly.

   \begin{lemma}
      \begin{align*}
L \Phi_{iab} & = - V_{iab} + W_{iab} + \frac{1}{2} \bigl( (V_i^m + W_i^m) \Phi_{mab} + (V_a^m + W_a^m) \Phi_{mib} + (V_b^m + W_b^m) \Phi_{mia} \bigr)
\\& - \frac{1}{2} \Phi^{l}_{ik} \Phi^{m}_{al} \Phi^{k}_{bm}
+ \frac{1}{4} \Bigl( g^{k}_{i} \Phi_{kab} +  g^{k}_{a} \Phi_{kib} +  g^{k}_{b} \Phi_{kia} \Bigr).
\end{align*}
\end{lemma}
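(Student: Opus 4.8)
The plan is to read $L\Phi_{iab}$, following the bracket convention of this section, as $(LC)_{iab}$, where $C$ denotes the symmetric $(0,3)$-tensor whose components in the fixed Euclidean chart are $C_{iab}=\Phi_{iab}$ (a constant multiple of Calabi's cubic form, i.e.\ the lowered difference tensor between the Levi-Civita connection of $h$ and the flat connection), and to compute it by unwinding every covariant derivative into Euclidean partial derivatives through $\Gamma^k_{ij}=\frac12\Phi^k_{ij}$ and then feeding the result into Lemma \ref{Lpd}. This is the exact analogue, one order higher, of the computation of $L f_i$ carried out just above.

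First I would record the first covariant derivative of $C$, namely $\nabla_q C_{iab}=\Phi_{iabq}-\tfrac12\bigl(\Phi^m_{qi}\Phi_{mab}+\Phi^m_{qa}\Phi_{mib}+\Phi^m_{qb}\Phi_{mia}\bigr)$, and then differentiate once more covariantly: since $\nabla_q C$ is a $(0,4)$-tensor, $\nabla_p$ contributes the Euclidean $\partial_p$ together with four Christoffel corrections, one for each of the slots $q,i,a,b$. Tracing against $\Phi^{pq}$ yields $\Delta C_{iab}$, and subtracting $\frac12(V^k+W^k)\nabla_k C_{iab}$ yields $(LC)_{iab}$, now expressed purely in Euclidean derivatives. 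The outcome splits into: (i) a fifth-order trace $\Phi^{pq}\Phi_{iabpq}$, which together with the leading part of the drift equals $L[\Phi_{iab}]$ up to a fourth-order remainder, with $L[\Phi_{iab}]$ supplied by equation (\ref{d3}); (ii) terms in which a partial derivative falls on an inverse Hessian, each producing a factor $-\Phi^{ms}\Phi_{stp}\Phi^{tk}$ and hence a contribution cubic in the third derivatives of $\Phi$; (iii) the genuine Christoffel cross terms, products of two factors $\frac12\Phi^m_{\bullet\bullet}$ with one entry of $\nabla C$, which expand again into cubic third-derivative terms and into bilinear terms $\Phi^m_{\bullet\bullet}\Phi_{m\bullet\bullet\bullet}$; and (iv) trace and drift terms controlled by $\Phi^{pq}\Phi_{pqk}=W_k-V_k$ from (\ref{d1}), which produce the first-order contributions in $V$ and $W$.

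Next I would clear all the fourth-order content: the entries $\Phi_{abi}\Phi^{ab}_{jk}$ and permutations sitting inside (\ref{d3}), the fourth-order remainder from (i), and every fourth derivative of $\Phi$ produced in (ii)--(iii) are to be rewritten by differentiating the Monge-Amp{\`e}re equation (\ref{MA}), equivalently the trace identity (\ref{d1}), once more; this trades each such fourth derivative for derivatives of $V,W$ and for terms cubic in the third derivatives of $\Phi$. One expects, and must verify, that after this substitution \emph{every} fourth-order term cancels --- it must, since the claimed right-hand side carries none --- and what survives are the first-order terms $-V_{iab}+W_{iab}$ and $\tfrac12\bigl((V_i^m+W_i^m)\Phi_{mab}+(V_a^m+W_a^m)\Phi_{mib}+(V_b^m+W_b^m)\Phi_{mia}\bigr)$, plus a large pile of terms cubic in the $\Phi_{\bullet\bullet\bullet}$.

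The main obstacle is the bookkeeping of that cubic pile: one must show that all the cubic contractions --- from the surviving $-2\Phi^a_{bi}\Phi^b_{cj}\Phi^c_{ak}$ in (\ref{d3}), from (ii), from (iii), and from the fourth-order reduction --- assemble into precisely $-\tfrac12\Phi^{l}_{ik}\Phi^{m}_{al}\Phi^{k}_{bm}+\tfrac14\bigl(g^{k}_{i}\Phi_{kab}+g^{k}_{a}\Phi_{kib}+g^{k}_{b}\Phi_{kia}\bigr)$. Two structural facts keep this in hand. First, the answer must be fully symmetric in $i,a,b$, since $C$ is symmetric and $\Delta$ (hence $L$) preserves symmetry in the lower slots, so terms may be tracked in symmetrized triples. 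Second, the ``closed triangle'' $\Phi^{l}_{ik}\Phi^{m}_{al}\Phi^{k}_{bm}=\mathrm{tr}(C_iC_aC_b)$ is itself fully symmetric in $i,a,b$, because each $C_i$ (the $(1,1)$-tensor $(C_i)^l_k=\Phi^l_{ik}$) is $h$-self-adjoint and the trace of a product of three self-adjoint operators is invariant under transposition and hence under all permutations; this is why the three differently routed ``path'' contractions collapse into that single term. This is exactly the situation the diagrammatic calculus of Section 4 is built for: each contraction pattern becomes a labelled graph whose coefficient is carried along its edges, turning the collapse into a finite, checkable identity rather than an unmanageable expansion. A convenient final sanity check is the case of constant $V,W$ (so $\det D^2\Phi$ is constant), where the formula must recover Calabi's classical third-order identity for the cubic form.
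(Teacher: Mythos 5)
Your proposal follows essentially the same route as the paper, which obtains this lemma by ``direct computation with the help of Lemma \ref{Lpd}'' — i.e.\ expanding the covariant derivatives through $\Gamma^k_{ij}=\frac12\Phi^k_{ij}$, invoking (\ref{d3}), and organizing the cubic bookkeeping via the diagrammatic calculus of Section 4 — so the approaches coincide. One small correction to an otherwise sound outline: the non-traced fourth-order terms such as $\Phi^{mp}_i\Phi_{mpab}$ cannot be traded away by differentiating the Monge--Amp\`ere equation (only genuine traces $\Phi^{pq}\Phi_{pq\cdots}$ can); they cancel directly against the corresponding terms of (\ref{d3}), which is exactly the cancellation you anticipate must occur.
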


\begin{proposition}
\label{lgij}
\begin{align*}
L g_{ij} & =  (-V_{iab} + W_{iab}) \Phi^{ab}_j +  (-V_{jab} + W_{jab}) \Phi^{ab}_i
\\&   + \frac{1}{2} \Bigl(  (V_{is} + W_{is}) g^{s}_j + (V_{js} + W_{js}) g^{s}_i\Bigr)+ 2(V_{am} + W_{am}) \Phi^{m}_{ib} \Phi^{ab}_j
\\&  +\frac{1}{2} g_{ki} g^{k}_{j} + 2 \nabla_p \Phi_{iab} \nabla^p  \Phi_{j}^{ab}
+    8 \mbox{\rm{R}}_{iabc} \mbox{\rm{R}}_{j}^{abc}.
\end{align*}
In particular, if $\Phi$ satisfies (\ref{HKE}) then
$$
L g_{ij} =   \bigl( g_{ki} (\mbox{\rm{Ric}}_{\mu})^{k}_{j} + g_{kj} (\mbox{\rm{Ric}}_{\mu})^{k}_{i}\bigr)+ 2 \nabla_p \Phi_{iab} \nabla^p  \Phi_{j}^{ab} + 8 \mbox{\rm{R}}_{iabc} \mbox{\rm{R}}_{j}^{abc},
$$
which implies
$$
L g \ge 2 g \odot \mbox{\rm{Ric}}_{\mu}.
$$
\end{proposition}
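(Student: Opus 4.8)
The plan is to differentiate the contraction $g_{ij} = h^{ac} h^{bd} \Phi_{iab} \Phi_{jcd}$ twice, feed in the formula for $L\Phi_{iab}$ from the preceding Lemma, and recognise the surviving purely cubic terms as a multiple of Calabi's curvature expression. Since the metric is parallel, $\nabla_p h^{ac} = 0$, and since $L = \Delta - \frac12(V^k + W^k)\nabla_k$ differs from the tensor Laplacian $\Delta = \Phi^{pq}\nabla_p\nabla_q$ (defined above) by a first-order derivation, the Leibniz rule gives
$$ L g_{ij} = (L\Phi_{iab})\,\Phi_j^{ab} + (L\Phi_{jab})\,\Phi_i^{ab} + 2\,\nabla_p\Phi_{iab}\,\nabla^p\Phi_j^{ab}. $$
Here $\Phi_{iab}$ is read as the $(0,3)$-tensor whose components in the fixed Euclidean frame are $\partial_i\partial_a\partial_b\Phi$, so that $\nabla_p\Phi_{iab}$ and $\Delta\Phi_{iab}$ are exactly the tensors computed in the Lemma; the cross-term --- manifestly symmetric in $i,j$, and carrying the only genuine fourth-order input (through the $\Phi_{piab}$ inside $\nabla_p\Phi_{iab}$) --- is precisely what is kept verbatim in the statement.

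Next I would insert the Lemma's formula for $L\Phi_{iab}$ and contract with $\Phi_j^{ab}$. Using the total symmetry of the cubic form and the $a\leftrightarrow b$ symmetry of $\Phi_j^{ab}$ --- which makes the two ``off-diagonal'' summands of each three-term bracket in the Lemma coincide --- the terms involving $V$ and $W$ produce $(-V_{iab}+W_{iab})\Phi_j^{ab}$, $\frac12(V_{is}+W_{is})g^s_j$ and $(V_{am}+W_{am})\Phi^m_{ib}\Phi_j^{ab}$. Adding the symmetric contribution $(L\Phi_{jab})\Phi_i^{ab}$ supplies the transposed matter terms and doubles the last expression --- which one checks is already $i\leftrightarrow j$ symmetric --- so that the first two lines of the asserted identity appear. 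The summand $\frac14 g^k_i\Phi_{kab}$ of $L\Phi_{iab}$ contracts to $\frac14 g^k_i g_{kj}$, and together with its transpose this is the term $\frac12 g_{ki}g^k_j$.

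What is left over are the purely quartic-in-$\Phi_{\cdot\cdot\cdot}$ contributions --- namely $-\frac12\Phi^l_{ik}\Phi^m_{al}\Phi^k_{bm}\Phi_j^{ab}$ and $\frac12 g^k_a\Phi_{kib}\Phi_j^{ab}$ together with their $i\leftrightarrow j$ transposes --- and the crux of the computation is the purely algebraic identity that their symmetrisation equals $8\,\mathrm{R}_{iabc}\mathrm{R}_j^{abc}$. I expect this to be the only genuinely delicate step. One expands $\mathrm{R}_{iabc} = \frac14(\Phi_{ics}\Phi^s_{ab} - \Phi_{ibs}\Phi^s_{ac})$, writes out the four quartic contractions of $\mathrm{R}_{iabc}\mathrm{R}_j^{abc}$, and matches them against the two terms above by relabelling dummy indices and repeatedly invoking the total symmetry of the cubic form, the spurious ``non-curvature'' contractions cancelling in pairs. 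The diagrammatic calculus of Section~4 is designed to make precisely this bookkeeping tractable; assembling all the pieces yields the general identity for $L g_{ij}$.

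For the specialisation, suppose $\Phi$ solves (\ref{HKE}); then the source potential is $V = \alpha\Phi$ and the target measure is Lebesgue, so $W \equiv 0$, whence $V_{ij} = \alpha\Phi_{ij}$, $V_{ijk} = \alpha\Phi_{ijk}$, $V_i^m = \alpha\delta_i^m$, and all derivatives of $W$ vanish. Substituting, and using $\Phi_{iab}\Phi_j^{ab} = g_{ij}$, $\Phi_{is}g^s_j = g_{ij}$, $\Phi_{am}\Phi^m_{ib} = \Phi_{iab}$, the matter terms collapse to $-2\alpha g_{ij} + \alpha g_{ij} + 2\alpha g_{ij} = \alpha g_{ij}$, so that $L g_{ij} = \frac12 g_{ki}g^k_j + \alpha g_{ij} + 2\nabla_p\Phi_{iab}\nabla^p\Phi_j^{ab} + 8\mathrm{R}_{iabc}\mathrm{R}_j^{abc}$. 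Since $(\mathrm{Ric}_\mu)_{ij} = \frac14 g_{ij} + \frac12 V_{ij} + \frac12 W_{ij} = \frac14 g_{ij} + \frac\alpha2\Phi_{ij}$ here, a one-line check gives $g_{ki}(\mathrm{Ric}_\mu)^k_j + g_{kj}(\mathrm{Ric}_\mu)^k_i = \frac12 g_{ki}g^k_j + \alpha g_{ij} = 2(g\odot\mathrm{Ric}_\mu)_{ij}$, which is the second identity. Finally, both remaining summands are positive semidefinite $(0,2)$-tensors --- each is a Gram contraction, e.g.\ $\xi^i\xi^j\,\nabla_p\Phi_{iab}\,\nabla^p\Phi_j^{ab} = \Phi^{pq}h^{ac}h^{bd}(\xi^i\nabla_p\Phi_{iab})(\xi^j\nabla_q\Phi_{jcd}) \ge 0$ --- so $L g \ge 2\,g\odot\mathrm{Ric}_\mu$, as claimed.
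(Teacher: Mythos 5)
Your proposal follows the paper's own proof essentially verbatim: the same Leibniz decomposition $Lg_{ij}=(L\Phi_{iab})\Phi_j^{ab}+(L\Phi_{jab})\Phi_i^{ab}+2\nabla_p\Phi_{iab}\nabla^p\Phi_j^{ab}$, the same substitution of Lemma~3.3, and the same key contraction identity $g^k_a\Phi_{kib}\Phi_j^{ab}-\Phi^l_{ik}\Phi^m_{al}\Phi^k_{bm}\Phi_j^{ab}=8\mbox{\rm{R}}_{iabc}\mbox{\rm{R}}_j^{abc}$ (which you sketch at the same level of detail as the paper). The additional explicit checks you supply for the specialisation $V=\alpha\Phi$, $W=0$ and for the positive semi-definiteness of the two Gram terms are correct and merely fill in steps the paper leaves to the reader.
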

\begin{proof}
Applying $L$  to $g_{ij} = \Phi_{iab} \Phi_{j}^{ab}$, one gets
$$
L(g_{ij}) =( L \Phi_{iab}) \Phi^{ab}_j + 2 \nabla_p \Phi_{iab} \nabla^p \Phi_{j}^{ab} +
\Phi_{iab} ( L \Phi^{ab}_j ).
$$
Lemma \ref{lgij} implies
\begin{align*}
(L \Phi_{iab}) \Phi^{ab}_j
& =
\Bigl[ - V_{iab} + W_{iab} + \frac{1}{2} \bigl( (V_i^m + W_i^m) \Phi_{mab} + (V_a^m + W_a^m) \Phi_{mib} + (V_b^m + W_b^m) \Phi_{mia} \bigr) \Bigr] \Phi^{ab}_j
\\& - \frac{1}{2} \Phi^{l}_{ik} \Phi^{m}_{al} \Phi^{k}_{bm} \Phi^{ab}_j
+ \frac{1}{4} \Bigl( g^{k}_{i} \Phi_{kab} +  g^{k}_{a} \Phi_{kib} +  g^{k}_{b} \Phi_{kia} \Bigr) \Phi^{ab}_j.
\end{align*}
The similar formula for $(L \Phi_{jab}) \Phi^{ab}_i$ is obtained by interchanging $i$ and $j$. Using the relations
$g^k_i \Phi_{kab} \Phi^{ab}_j = g^{k}_{i} g_{kj}$ and
 $g^{k}_{a} \Phi_{kib} \Phi_{j}^{ab} -  \Phi^{l}_{ik} \Phi^{m}_{al} \Phi^{k}_{bm} \Phi^{ab}_{j}
= 8 \mbox{\rm{R}}_{iabc} \mbox{\rm{R}}_{j}^{abc}$ one gets
\begin{align*}
(L \Phi_{iab}) \Phi^{ab}_j
& =
\Bigl[ - V_{iab} + W_{iab} + \frac{1}{2} \bigl( (V_i^m + W_i^m) \Phi_{mab} + (V_a^m + W_a^m) \Phi_{mib} + (V_b^m + W_b^m) \Phi_{mia} \bigr) \Bigr] \Phi^{ab}_j
\\& + \frac{1}{4} g^{k}_{i} g_{kj} + 4 \mbox{\rm{R}}_{iabc} \mbox{\rm{R}}_{j}^{abc}
\end{align*}
and the claim follows.
\end{proof}

 \section{Computations with diagrams}

 Some of the computations of the previous sections are rather tedious, and some of the formulas
 are
 not very pleasant to the eye. Consider, for example, the following expression from formula (\ref{d3}):
 $$ - V_{ijk} + W_{ijk}
+ \bigl(  W^{s}_{i} \Phi_{sjk} + W^{s}_{j} \Phi_{sik} + W^{s}_{k} \Phi_{sij} \bigr)
+ \bigl(\Phi_{abi} \Phi^{ab}_{jk} + \Phi_{abj} \Phi^{ab}_{ik} +  \Phi_{abk} \Phi^{ab}_{ij} \bigr) - 2 \Phi^{a}_{bi} \Phi^{b}_{cj} \Phi^{c}_{ak}. $$
We propose to replace it  by the diagram in Figure 1.

\begin{figure}[h!]
\begin{center}

\begin{tikzpicture}[show background rectangle,>=stealth',shorten >=1pt,auto,node distance=1.6cm,
  thick,main node/.style={circle,draw,inner sep=1pt,minimum size=1pt}]

  \node[main node] (A) {$V$};
  \draw (A) -- ++(-0.5,-1.4);
  \draw (A) -- ++(0,-1.4);
  \draw (A) -- ++(0.5,-1.4);
  \node at ($(A) + (0, -2)$) {$-1$};

  \node at ($(A) + (0.9, -0.65)$) {$+$};

  \node[main node] (B) [right=1.3 of A] {$W$};
  \draw (B) -- ++(-0.5,-1.4);
  \draw (B) -- ++(0,-1.4);
  \draw (B) -- ++(0.5,-1.4);
  \node at ($(B) + (0, -2)$) {$1$};

  \node at ($(B) + (1.0, -0.65)$) {$+$};

  \node[main node] (C) [right=1.3 of B] {$W$};
  \node[main node] (D) [right of=C] {$\Phi$};
  \draw (C) -- (D);
  \draw (C) -- +(0,-1.4);
  \draw (D) -- +(0.3,-1.4);
  \draw (D) -- +(-0.3,-1.4);
  \node at ($(C) + (0.8, -2)$) {$3$};

  \node at ($(C) + (2.5, -0.65)$) {$+$};

  \node[main node] (E) [right=1.3 of D] {$\Phi$};
  \node[main node] (F) [right of=E] {$\Phi$};
  \draw (E) edge [bend right] node[left] {} (F);
  \draw (E) edge [bend left] node[left] {} (F);
  \draw (E) -- ++(0,-1.4);
  \draw (F) -- ++(0.3,-1.4);
  \draw (F) -- ++(-0.3,-1.4);
  \node at ($(E) + (0.8, -2)$) {$3$};

  \node at ($(E) + (2.5, -0.65)$) {$+$};

  \node[main node] (G) [right=1.3 of F] {$\Phi$};
  \node[main node] (H) [below right= 0.3cm and 0.4cm of G] {$\Phi$};
  \node[main node] (I) [right of=G] {$\Phi$};
  \draw (G) -- (H);
  \draw (H) -- (I);
  \draw (I) -- (G);
  \draw (G) -- ++(0,-1.4);
  \draw (H) -- ++(0,-0.7);
  \draw (I) -- ++(0,-1.4);
  \node at ($(G) + (0.8, -2)$) {$-2$};

\end{tikzpicture}
\end{center}
\caption{}
\end{figure}

The diagram in Figure 1 is the weighted sum of five basic diagrams, the number below each basic diagram is its coefficient.
A basic diagram $D$ consists of a set of vertices $V = V(D)$ and two collections of edges $E_{int} = E_{int}(D)$ and $E_{ext} = E_{ext}(D)$.
Each vertex is marked with a letter, which is usually either $\Phi, V$ or $W$. An internal edge $e \in E_{int}$
connects two vertices $x, y \in V$. An external edge is connected only to a single vertex.
To each basic diagram $D$ with $L = \#(E_{ext})$ there
corresponds a symmetric $(0,L)$-tensor
 constructed via the following mechanism:

\begin{enumerate}
\item Associate a new index with any edge.
Assume that the indices associated with the external edges are $i_1,\ldots,i_L$
and that those associated with internal edges are $i_{L+1},\ldots,i_{L + L'}$.
\item Orient all of the internal edges in an arbitrary manner.
For each vertex $v$, let $v^{up}$ (respectively, $v_{down}$)
be the set of all indices
of edges arriving at $v$ (respectively, emanating from $v$). Let $v_{ext}$ be the set of all indices of external edges connected to $v$.
\item For a vertex $v$  write $S(v)$ for the letter with which it is marked.
Write $S_L$ for the collection of all permutations of $\{1,\ldots,L\}$.
For a permutation $\sigma \in S_L$
set $\sigma(v_{ext}) = \{ i_{\sigma(j)} \, ; \, i_j \in v_{ext} \}$.
\item
The resulting symmetric tensor is
\begin{equation} \frac{1}{L!} \sum_{\sigma \in S_L} \prod_{v \in V} S(v)^{v^{up}}_{v_{down}, \sigma(v_{ext})} dx^{i_1} \ldots dx^{i_L}. \label{eq_546} \end{equation}
\end{enumerate}

\medskip Note that If $v$ is a vertex marked by $\Phi$ whose degree is two, then $v$ is the middle point of a certain path, and
we may contract this path to an edge and obtain an equivalent diagram.

\medskip 
We may also accommodate non-symmetric tensors, by marking the external edges of the diagram with the indices
$i_1,\ldots,i_L$. In this case, the tensor corresponding to the diagram is constructed in the same way, except that in (\ref{eq_546}), we always take $\sigma$
to be the identity permutation, i.e., there is no need for a sum and for the normalizing $1/L!$ factor.
From our experience, after a bit of training it is easier to compute
the symmetric contraction product, the covariant derivative and the weighted Laplacian of a tensor in terms of these diagrams.

\medskip We proceed to describe the contraction product of  two basic diagrams, an example is presented in Figure 2.

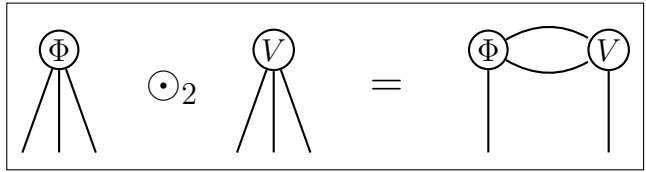
\begin{figure}[h!]
\begin{center}

\begin{tikzpicture}[show background rectangle,>=stealth',shorten >=1pt,auto,node distance=1.6cm,
  thick,main node/.style={circle,draw,inner sep=1pt,minimum size=1pt}]

  \node[main node] (A) {$\Phi$};
  \draw (A) -- ++(-0.5,-1.4);
  \draw (A) -- ++(0,-1.4);
  \draw (A) -- ++(0.5,-1.4);
  \node at ($(A) + (1.5, -0.5)$) {\Large $\odot_2$};

  \node[main node] (B) [right=2.3 of A] {$V$};
  \draw (B) -- ++(-0.5,-1.4);
  \draw (B) -- ++(0,-1.4);
  \draw (B) -- ++(0.5,-1.4);
  \node at ($(B) + (1.5, -0.5)$) {\Large $=$};

  \node[main node] (C) [right=2.3 of B] {$\Phi$};
  \node[main node] (D) [right of=C] {$V$};
  \draw (C) edge [bend right] node[left] {} (D);
  \draw (C) edge [bend left] node[left] {} (D);
  \draw (C) -- ++(0,-1.4);
  \draw (D) -- ++(0,-1.4);

\end{tikzpicture}
\end{center}
\caption{The tensor $\Phi_{ik \ell} V_j^{k \ell}$, which is the symmetric contraction product of $\Phi_{ijk}$ and $V_{ijk}$, where we contract two indices.}
\end{figure}

Formally, assume that we are given two basic diagrams $D_1$ and $D_2$.
Set $L_i = \#(E_{ext}(D_i))$ for $i=1,2$, and assume that $L_1 \geq k$ and $L_2 \geq k$
where $k \geq 1$ is an integer.
The symmetric contraction  product $D_1 \odot_k D_2$
is described as follows:

\begin{enumerate}
\item For each subset $A \subseteq E_{ext}(D_1)$ and $B \subseteq E_{ext}(D_2)$, both of size exactly $k$,
and for each  invertible map $f: A \rightarrow B$ we construct a basic diagram. The weight of this  basic
diagram is $1 / n$, where $n = (L_1! L_2!) / (k! (L_1 - k)! (L_2 - k)! )$ is the number of all possible choices of $A,B$ and $f$.

\item The basic diagram corresponding to $A, B$ and $f$, is the disjoint union of $D_1$ and $D_2$, except that for any $e \in A$, we replace the  pair of edges $e \in A$ and $f(e) \in B$
by a single edge connecting the vertex of $e$ and the vertex of $f(e)$.
\end{enumerate}

The integer $k$ in the notation $\odot_k$ is referred to as the order of the symmetric contraction product.
Recall that the contraction of two indices in a given tensor corresponds to a symmetric contraction product with the tensor $\Phi_{ij}$.

\medskip We move on to the description of covariant differentiation. The covariant derivative of a symmetric tensor is not necessarily symmetric.

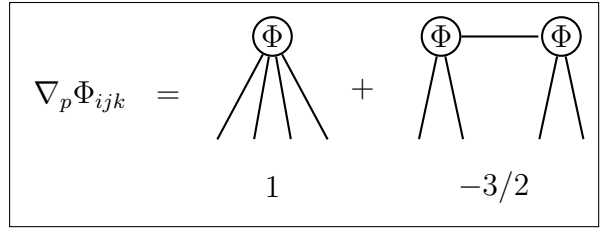
\begin{figure}[h!]
\begin{center}

\begin{tikzpicture}[show background rectangle,>=stealth',shorten >=1pt,auto,node distance=1.6cm,
  thick,main node/.style={circle,draw,inner sep=1pt,minimum size=1pt}]

  \node[main node] (A) {$\Phi$};
  \draw (A) -- ++(0.75,-1.4);
  \draw (A) -- ++(0.25,-1.4);
  \draw (A) -- ++(-0.25,-1.4);
  \draw (A) -- ++(-0.75,-1.4);
  \node at ($(A) + (0, -2)$) {$1$};

  \node at ($(A) + (-2.2, -0.8)$) {$\nabla_p \Phi_{ijk} \ \  =$};

  \node at ($(A) + (1.2, -0.7)$) {$+$};

  \node[main node] (B) [right=1.7 of A] {$\Phi$};
  \node[main node] (C) [right of=B] {$\Phi$};
  \draw (B) -- ++(-0.3,-1.4);
  \draw (B) -- ++(0.3,-1.4);
  \draw (B) -- (C);
  \draw (C) -- ++(-0.3,-1.4);
  \draw (C) -- ++(0.3,-1.4);
  \node at ($(B) + (0.7, -2)$) {$-3/2$};
\end{tikzpicture}
\end{center}
\caption{The covariant derivative $\nabla_p (\Phi_{ijk})$ has turned out to be a symmetric tensor.}
\end{figure}

Here are the general rules for depicting  the covariant derivative $\nabla_p$ of the basic diagram $D$:

\begin{enumerate}
\item For each vertex $v$, we add a basic diagram $D_v$ whose weight is $+1$. The basic diagram $D_v$ is constructed from $D$
by adding an external edge emanating from $v$ and marked by $p$.
\item For each internal edge $e$, we add a basic diagram $D_e$ whose weight is $-1$. The basic diagram $D_e$ is constructed from $D$
by adding an external edge, marked by $p$, which is emanating from a new vertex, marked by $\Phi$, in the middle of the edge $e$.
\item For each external edge $e$ we add a basic diagram whose weight is $-1/2$, which is constructed exactly as in the case of an internal edge.
\end{enumerate}

An internal edge $e \in E_{int}$ is called a loop if it connects a vertex to itself.
The Monge-Amp\`ere equation (\ref{MA}) allows us to eliminate any loop which connects a vertex marked by $\Phi$ to itself.
For example, by differentiating (\ref{MA}) we obtain
$$ \Phi_{ji}^j = -V_i + W_i, \qquad \Phi_{ijk}^k = -V_{ij} + W_{ij} + \Phi_{i k}^{\ell} \Phi_{j \ell}^k + \Phi_{ij}^k W_k. $$
Thus, we may replace a $\Phi$-vertex having  a loop and additional $k$ edges by a certain sum of basic diagrams.
The rules for loop elimination in the case where $k=1,2,3$ are depicted in Figure 4.

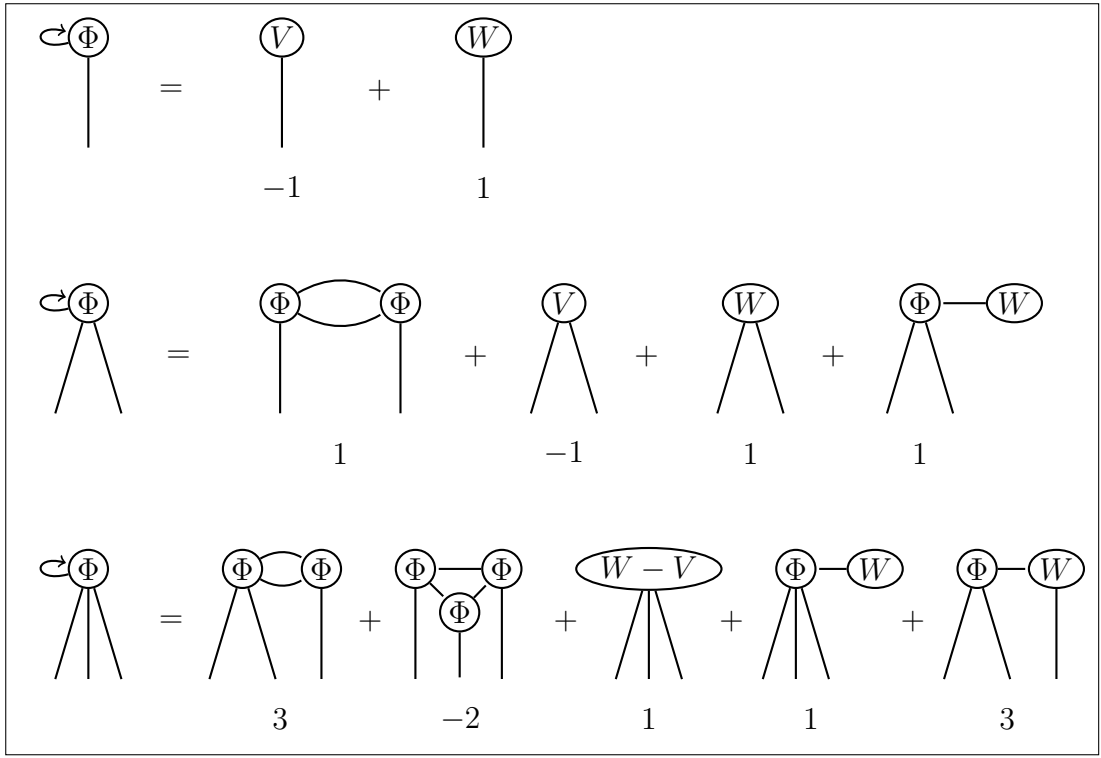
\begin{figure}[h!]
\begin{center}

\begin{tikzpicture}[show background rectangle,shorten >=1pt,auto,node distance=1.6cm,
  thick,main node/.style={ellipse,draw,inner sep=1pt,minimum size=1pt}]

  \node[main node] (A) {$\Phi$};
  \draw[] (A) -- ++(0, -1.5);
  \path[every node/.style={font=\sffamily\small}]
    (A) edge [loop left] node {} (A);

  \node at ($(A) + (1.1, -0.7)$) {$=$};

  \node[main node] (B) [right = 2.0 of A] {$V$};
  \draw[] (B) -- ++(0,-1.5);
  \node at ($(B) + (0, -2)$) {$-1$};

  \node at ($(B) + (1.3, -0.7)$) {$+$};

  \node[main node] (C) [right = 2 of B] {$W$};
  \draw[] (C) -- ++(0,-1.5);
  \node at ($(C) + (0, -2)$) {$1$};

  \node[main node] (D) [below = 3 of A] {$\Phi$};
  \draw [](D) -- ++(0.45, -1.5);
  \draw [](D) -- ++(-0.45, -1.5);
  \path[every node/.style={font=\sffamily\small}]
    (D) edge [loop left] node {} (D);

  \node at ($(D) + (1.2, -0.7)$) {$=$};

  \node[main node] (E) [right=2 of D] {$\Phi$};
  \node[main node] (F) [right of=E] {$\Phi$};
  \draw (E) edge [bend right] node[left] {} (F);
  \draw (E) edge [bend left] node[left] {} (F);
  \draw[] (E) -- ++(0, -1.5);
  \draw[] (F) -- ++(0, -1.5);
  \node at ($(E) + (0.8, -2)$) {$1$};

  \node at ($(F) + (1, -0.7)$) {$+$};

  \node[main node] (G) [right = 1.6 of  F] {$V$};
  \draw [](G) -- ++(0.45, -1.5);
  \draw [](G) -- ++(-0.45, -1.5);
  \node at ($(G) + (0, -2)$) {$-1$};

  \node at ($(G) + (1.1, -0.7)$) {$+$};

  \node[main node] (H) [right = 1.8 of  G] {$W$};
  \draw [](H) -- ++(0.45, -1.5);
  \draw [](H) -- ++(-0.45, -1.5);
  \node at ($(H) + (0, -2)$) {$1$};

  \node at ($(H) + (1.1, -0.7)$) {$+$};

  \node[main node] (J) [right = 1.6 of  H] {$\Phi$};
  \node[main node] (I) [right = 0.6 of  J] {$W$};
  \draw (I) -- (J);
  \draw [](J) -- ++(0.45, -1.5);
  \draw [](J) -- ++(-0.45, -1.5);
  \node at ($(J) + (-0, -2)$) {$1$};

  \node[main node] (K) [below = 3 of D] {$\Phi$};
  \draw [](K) -- ++(0.45, -1.5);
  \draw [](K) -- ++(-0.45, -1.5);
  \draw [](K) -- ++(0, -1.5);
  \path[every node/.style={font=\sffamily\small}]
    (K) edge [loop left] node {} (K);

  \node at ($(K) + (1.1, -0.7)$) {$=$};

  \node[main node] (L) [right=1.5 of K] {$\Phi$};
  \node[main node] (M) [right=0.5 of L] {$\Phi$};
  \draw (L) edge [bend right] node[left] {} (M);
  \draw (L) edge [bend left] node[left] {} (M);
  \draw[] (L) -- ++(0.45, -1.5);
  \draw[] (L) -- ++(-0.45, -1.5);
  \draw[] (M) -- ++(0, -1.5);
  \node at ($(L) + (0.5, -2)$) {$3$};

  \node at ($(L) + (1.7, -0.7)$) {$+$};

  \node[main node] (N) [right=0.7 of M] {$\Phi$};
  \node[main node] (O) [below right= 0.2cm and 0.2cm of N] {$\Phi$};
  \node[main node] (P) [right= 0.6 of N] {$\Phi$};
  \draw (N) -- (O);
  \draw (O) -- (P);
  \draw (P) -- (N);
  \draw [] (N) -- ++(0,-1.5);
  \draw [] (O) -- ++(0,-0.9);
  \draw [] (P) -- ++(0,-1.5);
  \node at ($(N) + (0.6, -2)$) {$-2$};

  \node at ($(N) + (2, -0.7)$) {$+$};

  \node[main node] (Q) [right=0.7 of P] {$W-V$};
  \draw [](Q) -- ++(0.45, -1.5);
  \draw [](Q) -- ++(-0.45, -1.5);
  \draw [](Q) -- ++(0, -1.5);
  \node at ($(Q) + (0, -2)$) {$1$};

  \node at ($(Q) + (1.1, -0.7)$) {$+$};

  \node[main node] (R) [right = 0.7 of  Q] {$\Phi$};
  \node[main node] (S) [right = 0.4 of  R] {$W$};
  \draw (S) -- (R);
  \draw [](R) -- ++(0.45, -1.5);
  \draw [](R) -- ++(-0.45, -1.5);
  \draw [](R) -- ++(-0, -1.5);
  \node at ($(R) + (0.2, -2)$) {$1$};

  \node at ($(S) + (0.5, -0.7)$) {$+$};

  \node[main node] (T) [right = 0.7 of  S] {$\Phi$};
  \node[main node] (U) [right = 0.4 of  T] {$W$};
  \draw (T) -- (U);
  \draw [](T) -- ++(0.45, -1.5);
  \draw [](T) -- ++(-0.45, -1.5);
  \draw [](U) -- ++(-0, -1.5);
  \node at ($(T) + (0.4, -2)$) {$3$};

\end{tikzpicture}
\end{center}
\caption{Loop elimination \label{fig4}}
\end{figure}

Let us emphasize that when applying the loop elimination rules, we count all the edges related to the loop.
For example, if a $\Phi$-vertex has a loop, plus one external and one internal edge, then the second picture in Figure \ref{fig4} is applicable.
The combinatorics of loop elimination for an arbitrary $k \geq 4$ is not too complicated, but it
will not be needed here.  We move on to the weighted Laplacian $L$.
Here are the  rules for depicting  the weighted Laplacian $L$ of a basic diagram $D$:

\begin{enumerate}
\item For $a \in V \cup E_{ext} \cup E_{int}$ we denote
$$ w(a) = \left \{ \begin{array}{ll} 1 & a \in V \\ -1 & a \in E_{int} \\ -1/2 & a \in E_{ext} \end{array} \right. $$
\item For any $a,b \in V \cup E_{ext} \cup E_{int}$ with $a \neq b$ we add a basic diagram $D_{a,b}$ whose weight is $w(a) w(b)$.
The basic diagram $D_{a,b}$ is constructed from $D$
by adding an internal edge connecting $a$ and $b$. Note that when $a$ is an edge, adding an internal edge introduces a new vertex, marked by $\Phi$, in the middle of the edge $a$.
\item For any $a \in V \cup E_{ext} \cup E_{int}$ we add a basic diagram whose weight is $w(a)$ which
is obtained from $D$ by adding a loop around $a$. Note that when $a$ is an edge, we add a vertex marked by $\Phi$ in the middle of the edge $a$ and a loop around this vertex.
\item For any $a \in V \cup E_{ext} \cup E_{int}$ we add a basic diagram whose weight is $-w(a)$ which is obtained from $D$ by adding a new vertex marked by $W$ and an edge emanating from $a$ to the new vertex.  Note that when $a$ is an edge, the new edge is emanating from a new vertex in the middle of the edge $a$.
\item For any edge $e \in E_{ext} \cup E_{int}$ we add a basic diagram that is obtained from $D$ by adding two vertices on the edge $e$
and connecting one to the other via a new internal edge. The weight of this basic diagram is $2$ if $a \in E_{int}$ and is $3/4$ if $a \in E_{ext}$.
\end{enumerate}

\begin{figure}[h!]
\begin{center}

\begin{tikzpicture}[show background rectangle,>=stealth',shorten >=1pt,auto,node distance=1.4cm,
  thick,main node/.style={circle,draw,inner sep=0pt,minimum size=0pt}]

  \node[main node] (A1) {};
  \node[main node] (A2) [below of=A1] {};
  \node[main node] (A3) [right=0.5 of A1] {};
  \node[main node] (A4) [below of=A3] {};
  \draw (A1) edge [bend right] node[left] {} (A2);
  \draw (A3) edge [bend left] node[left] {} (A4);

  \node[main node] (A5) [right=0.25 of A1] {};
  \node[main node] (A6) [below of= A5] {};
  \draw [fill] (A5) circle [radius=0.05];
  \draw (A5) -- (A6);
  \node at ($(A1) + (-0.5, -0.7)$) {$L$};
  \node at ($(A3) + (0.7, -0.7)$) {$=$};

  \node[main node] (B1) [right of=A3] {};
  \node[main node] (B2) [below of=B1] {};
  \node[main node] (B3) [right= 1.9 of B1] {};
  \node[main node] (B4) [below of=B3] {};
  \draw (B1) edge [bend right] node[left] {} (B2);
  \draw (B3) edge [bend left] node[left] {} (B4);

  \node[main node] (B5) [right=0.25 of B1] {};
  \node[main node] (B6) [below of= B5] {};
  \node[main node] (B7) [above=0.3 of B5] {};
  \draw [fill] (B5) circle [radius=0.05];
  \draw (B5) -- (B6);
  \draw (B5) arc [radius=0.1, start angle=270, end angle= 630];
  \node at ($(B5) + (0.05, 0.45)$) {{\tiny rule 3}};
  \node at ($(B5) + (0.4, -0.7)$) {$+$};
  \node at ($(B6) + (0, -0.4)$) {$1$};

  \node[main node] (B8) [right=1.05 of B1] {};
  \node[main node] (B9) [below of= B8] {};
  \node[main node] (B10) [right=0.3 of B8] {{\tiny W}};
  \draw [fill] (B8) circle [radius=0.05];
  \draw (B8) -- (B9);
  \draw (B8) -- (B10);
  \node at ($(B8) + (0.3, 0.45)$) {{\tiny rule 4}};
  \node at ($(B9) + (0.23, -0.43)$) {$-1$};

  \node at ($(B3) + (0.7, -0.7)$) {$+$};

  \node[main node] (C1) [right of=B3] {};
  \node[main node] (C2) [below of=C1] {};
  \node[main node] (C3) [right= 1.9 of C1] {};
  \node[main node] (C4) [below of=C3] {};
  \draw (C1) edge [bend right] node[left] {} (C2);
  \draw (C3) edge [bend left] node[left] {} (C4);

  \node[main node] (C5) [right=0.25 of C1] {};
  \node[main node] (C6) [below of= C5] {};
  \draw [fill] (C5) circle [radius=0.05];
  \draw [fill] ($(C5) + (0, -0.6)$) circle [radius=0.05];
  \draw (C5) -- (C6);
  \draw ($(C5) + (0, -0.6)$)  arc [radius=0.1, start angle=0, end angle= 360];
  \node at ($(C5) + (0.05, 0.45)$) {{\tiny rule 3}};
  \node at ($(C5) + (0.4, -0.7)$) {$+$};
  \node at ($(C6) + (0, -0.4)$) {$-\frac{1}{2}$};

  \node[main node] (C8) [right=1.05 of C1] {};
  \node[main node] (C9) [below of= C8] {};
  \node[main node] (C10) [below right= 0.5cm and 0.3cm of C8] {{\tiny W}};
  \node[main node] (C11) [below=0.6 of C8] {};
  \draw [fill] (C8) circle [radius=0.05];
  \draw [fill] (C11) circle [radius=0.05];
  \draw (C8) -- (C9);
  \draw (C10) -- (C11);
  \node at ($(C8) + (0.3, 0.45)$) {{\tiny rule 4}};
  \node at ($(C9) + (0.3, -0.4)$) {$\frac{1}{2}$};

  \node at ($(C3) + (0.7, -0.7)$) {$+$};

  \node[main node] (D1) [right of=C3] {};
  \node[main node] (D2) [below of=D1] {};
  \node[main node] (D3) [right=0.7 of D1] {};
  \node[main node] (D4) [below of=D3] {};
  \draw (D1) edge [bend right] node[left] {} (D2);
  \draw (D3) edge [bend left] node[left] {} (D4);

  \node[main node] (D5) [right=0.25 of D1] {};
  \node[main node] (D6) [below= 0.6 of D5] {};
  \node[main node] (D7) [below of= D5] {};
  \draw [fill] (D5) circle [radius=0.05];
  \draw [fill] (D6) circle [radius=0.05];
  \draw (D5) edge [bend left] node[left] {} (D6);
  \draw (D5) edge [bend right] node[left] {} (D6);
  \draw (D6) -- (D7);
  \node at ($(D5) + (0.05, 0.45)$) {{\tiny rule 2}};
  \node at ($(D7) + (0, -0.4)$) {$2 \cdot \left(-\frac{1}{2} \right)$};

  \node at ($(D3) + (0.7, -0.7)$) {$+$};

  \node[main node] (E1) [right of=D3] {};
  \node[main node] (E2) [below= 0.3 of E1] {};
  \node[main node] (E3) [below= 0.9 of E1] {};
  \node[main node] (E4) [below of= E1] {};
  \draw [fill] (E1) circle [radius=0.05];
  \draw [fill] (E2) circle [radius=0.05];
  \draw [fill] (E3) circle [radius=0.05];
  \draw (E1) -- (E2);
  \draw (E2) edge [bend left] node[left] {} (E3);
  \draw (E2) edge [bend right] node[left] {} (E3);
  \draw (E3) -- (E4);
  \node at ($(E1) + (0.05, 0.45)$) {{\tiny rule 5}};
  \node at ($(E4) + (0, -0.4)$) {$\frac{3}{4}$};

  \node at ($(A3) + (0.7, -3.5)$) {$=$};

  \node at ($(A3) + (0.7, -3)$) {\tiny (loop elimination)};

  \node[main node] (F1) [below right= 2.85cm and 1.0cm of B1] {};
  \node[main node] (F2) [below of=F1] {};
  \node[main node] (F3) [right=0.8 of F1] {};
  \node[main node] (F4) [below of=F3] {};
  \draw (F1) edge [bend right] node[left] {} (F2);
  \draw (F3) edge [bend left] node[left] {} (F4);

  \node[main node] (F5) [right=0.25 of F1] {\tiny V};
  \node[main node] (F6) [below of= F5] {};
  \draw (F5) -- (F6);
  \node at ($(F6) + (0, -0.4)$) {$-1$};

  \node at ($(F3) + (0.7, -0.7)$) {$+$};

  \node[main node] (G1) [right of=F3] {};
  \node[main node] (G2) [below of=G1] {};
  \node[main node] (G3) [right= 3.2 of G1] {};
  \node[main node] (G4) [below of=G3] {};
  \draw (G1) edge [bend right] node[left] {} (G2);
  \draw (G3) edge [bend left] node[left] {} (G4);

  \node[main node] (G5) [right=0.25 of G1] {};
  \node[main node] (G6) [below of= G5] {};
  \node[main node] (G7) [right=0.4 of G5] {};
  \node[main node] (G8) [right=0.4 of G7] {};
  \draw [fill] (G5) circle [radius=0.05];
  \draw [fill] (G7) circle [radius=0.05];
  \draw [fill] (G8) circle [radius=0.05];
  \draw (G5) -- (G6);
  \draw (G7) -- (G8);
  \draw (G5) edge [bend right] node[left] {} (G7);
  \draw (G5) edge [bend left] node[left] {} (G7);
  \node at ($(G6) + (0.2, -0.4)$) {$-\frac{1}{2}$};
  \node at ($(G5) + (1.2, -0.7)$) {$+$};

  \node[main node] (G9) [right of= G7] {\tiny V-W};
  \node[main node] (G10) [right=0.5 of G9] {};
  \node[main node] (G11) [below of=G9] {};
  \draw [fill] (G10) circle [radius=0.05];
  \draw (G9) -- (G10);
  \draw (G9) -- (G11);
  \node at ($(G11) + (0.5, -0.4)$) {$\frac{1}{2}$};

  \node at ($(G3) + (0.7, -0.7)$) {$+$};

  \node[main node] (H1) [right of=G3] {};
  \node[main node] (H2) [below of=H1] {};
  \node[main node] (H3) [right=1.2 of H1] {};
  \node[main node] (H4) [below of=H3] {};
  \draw (H1) edge [bend right] node[left] {} (H2);
  \draw (H3) edge [bend left] node[left] {} (H4);

  \node[main node] (H5) [right=0.25 of H1] {\tiny V-W};
  \node[main node] (H6) [below of= H5] {};
  \draw (H5) -- (H6);
   \node at ($(H6) + (0, -0.4)$) {$1$};

  \node at ($(H3) + (0.7, -0.7)$) {$+$};

  \node[main node] (I5) [right of= H3] {};
  \node[main node] (I6) [below of= I5] {};
  \node[main node] (I7) [right=0.4 of I5] {};
  \node[main node] (I8) [right=0.4 of I7] {};
  \draw [fill] (I5) circle [radius=0.05];
  \draw [fill] (I7) circle [radius=0.05];
  \draw [fill] (I8) circle [radius=0.05];
  \draw (I5) -- (I6);
  \draw (I7) -- (I8);
  \draw (I5) edge [bend right] node[left] {} (I7);
  \draw (I5) edge [bend left] node[left] {} (I7);
  \node at ($(I6) + (0.4, -0.4)$) {$\frac{3}{4}$};

\end{tikzpicture}
\caption{Proof of Corollary 3.2 \label{fig112}}
\end{center}
\end{figure}
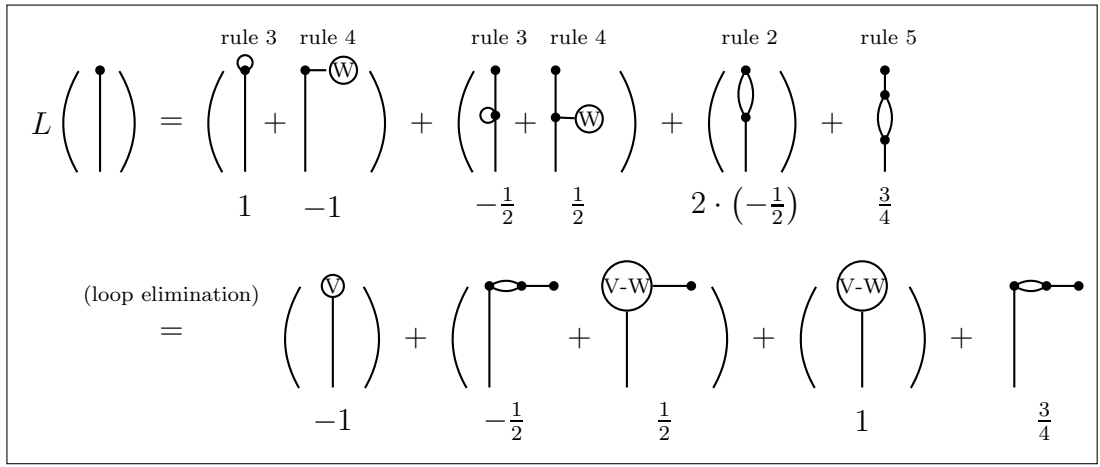

Note that the basic diagram $D_{a,b}$ in the first rule is the same as $D_{b,a}$, hence it appears twice in the resulting diagram.
Figure \ref{fig112} provides an alternative proof of Corollary 3.2 by using the graphical notation.
In order to ease the notation, let us agree that unless specified otherwise, all vertices are marked with $\Phi$ by default.
This makes the diagrams look a bit more clean. The computation of $L \Phi_{ijk}$ in Lemma 3.3 may be depicted in the same way.
In order to verify Proposition 3.4, one needs to compute
the symmetric contraction product of order $2$ of the two tensors $L \Phi_{ijk}$ and $\Phi_{ijk}$.
Figure \ref{fig1447} presents the main expression in Proposition 3.4.

 \begin{figure}[h!]
\begin{center}

\begin{tikzpicture}[show background rectangle,>=stealth',shorten >=1pt,auto,node distance=1.3cm,
  thick,main node/.style={circle,draw,inner sep=1pt,minimum size=1pt}]

  \node[main node] (A) {$V-W$};
  \node[main node] (B) [right of=A] {$\Phi$};
  \draw (A) edge [bend right] node[left] {} (B);
  \draw (A) edge [bend left] node[left] {} (B);
  \draw (A) -- ++(0,-1.4);
  \draw (B) -- ++(0,-1.4);
  \node at ($(A) + (0.6, -2)$) {$-2$};

  \node at ($(A) + (1.9, -0.7)$) {$+$};

  \node[main node] (E) [right= 0.7 of B] {$\Phi$};
  \node[main node] (F) [right= 0.5 of E] {$\Phi$};
  \node[main node] (B) [right= 0.5 of F] {$V+W$};
  \draw (E) edge [bend right] node[left] {} (F);
  \draw (E) edge [bend left] node[left] {} (F);
  \draw (F) -- (B);
  \draw (E) -- ++(0,-1.4);
  \draw (B) -- ++(0,-1.4);
  \node at ($(F) + (0.4, -2)$) {$1$};

  \node at ($(E) + (3.5, -0.7)$) {$+$};

  \node[main node] (G) [right=0.7 of B] {$\Phi$};
  \node[main node] (H) [above right= 0.3cm and 0.3cm of G] {$V+W$};
  \node[main node] (I) [right= 1.5 of G] {$\Phi$};
  \draw (G) -- (H);
  \draw (H) -- (I);
  \draw (I) -- (G);
  \draw (G) -- ++(0,-1.4);
  \draw (H) -- ++(0,-0.7);
  \draw (I) -- ++(0,-1.4);
  \node at ($(G) + (1, -2)$) {$2$};

  \node at ($(I) + (0.55, -0.7)$) {$+$};

  \node[main node] (P) [right= 0.7 of I] {$\Phi$};
  \node[main node] (Q) [right= 0.5 of P] {$\Phi$};
  \node[main node] (R) [right= 0.5 cm of Q] {$\Phi$};
  \node[main node] (S) [right= 0.5 of R] {$\Phi$};
  \draw (P) edge [bend right] node[left] {} (Q);
  \draw (P) edge [bend left] node[left] {} (Q);
  \draw (Q) -- (R);
  \draw (R) edge [bend right] node[left] {} (S);
  \draw (R) edge [bend left] node[left] {} (S);
  \draw (P) -- ++(0,-1.4);
  \draw (S) -- ++(0,-1.4);
  \node at ($(Q) + (0.4, -2)$) {$1/2$};
\end{tikzpicture}

\end{center}
\caption{The tensor $Lg_{ij} - 2 \nabla_p \Phi_{iab} \nabla^p \Phi_j^{ab} - 8 R_{iabc} R_j^{abc}$ \label{fig1447}}
\end{figure}
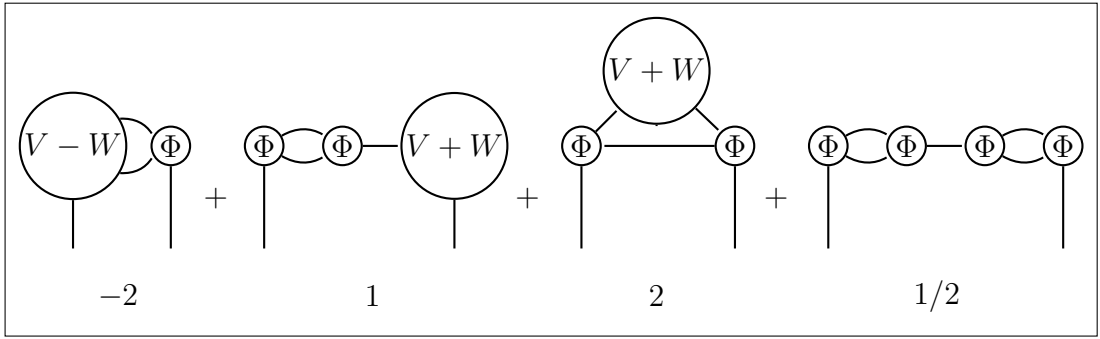

\section{Negativity of $\mbox{Ric}_{\mu}$}

We are ready to prove Theorem  \ref{mainth}.
The existence of the unique solution to (\ref{HKE}) is established in Theorem \ref{CYth}.
We will also apply the following lemma which is a generalization of some classical facts  proved already in the works of Calabi \cite{Calabi1}
and Osserman \cite{Oss}. Extension to the metric-measure space is based on the estimates
for weighted Laplacians of the distance function (see, for instance, \cite{BQ}). 
A more general statement with a proof can be found in \cite{Fox},  Theorem 3.3.

\begin{lemma}
\label{bounded}
Let $(M,g)$ be a complete Riemannian manifold equipped with a measure $\mu = e^{-V} d vol_g$ with twice continuously differentiable density.
Assume that its generalized Bakry-{\'E}mery tensor $\mbox{\rm{Ric}}_{\mu, N} = \mbox{\rm{Ric}}_{\mu} - \frac{1}{N-n} \nabla V \otimes \nabla V$ satisfies
$$
\mbox{\rm{Ric}}_{\mu, N} \ge K
$$
   for some $K \in \mathbb{R}$ and $N > n$.
Let $u \in C^2(M)$ be a non-negative function satisfying
\begin{equation}
Lu \ge Bu^2 - Au \label{eq_new2}
\end{equation}
for some $A,B>0$ at every point $x$ with  $u(x) \ne 0$, where $L = \Delta - \nabla V \cdot \nabla$
is the weighted Laplacian. Then
$$
\sup_{M} u \le \frac{A}{B}.
$$
\end{lemma}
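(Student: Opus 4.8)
The plan is to run a maximum-principle argument for the sub-solution $u$ of the differential inequality \eqref{eq_new2}, following the classical Omori--Yau technique adapted to the weighted (Bakry--\'Emery) setting. The key difficulty is that $M$ need not be compact, so $u$ need not attain its supremum; one must therefore produce an \emph{almost-maximizing} sequence of points at which the second-order terms are controlled, and this is exactly where the curvature-dimension hypothesis $\mbox{\rm{Ric}}_{\mu,N}\ge K$ enters.

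\emph{Step 1: reduction to an Omori--Yau-type generalized maximum principle.} First I would recall (from \cite{BQ}, or the references cited for Lemma~\ref{bounded}) that on a complete weighted manifold $(M,g,\mu=e^{-V}dvol_g)$ with $\mbox{\rm{Ric}}_{\mu,N}\ge K$ for some $N>n$ and $K\in\mathbb{R}$, the operator $L=\Delta-\nabla V\cdot\nabla$ satisfies the generalized (weak) maximum principle: for any $u\in C^2(M)$ bounded above there is a sequence $x_k\in M$ with
$$
u(x_k)\to \sup_M u, \qquad |\nabla u(x_k)|\to 0, \qquad \limsup_{k\to\infty} Lu(x_k)\le 0.
$$
This is the standard consequence of the weighted Laplacian comparison theorem for the distance function under a lower $\mbox{\rm{Ric}}_{\mu,N}$ bound; the role of the parameter $N>n$ is precisely to make the comparison estimate for $Lr$ (where $r$ is the distance to a fixed point) effective, replacing the dimension $n$ in the Riemannian case.

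\emph{Step 2: boundedness of $u$.} Before invoking Step~1 one must know $u$ is bounded above. At an interior point $x_0$ with $u(x_0)\ne 0$, if $u$ attained a large value one could localize; more robustly, one argues by contradiction: if $\sup_M u=+\infty$, pick a sequence $y_k$ with $u(y_k)\to\infty$, apply the generalized maximum principle along truncations $u\wedge c$ with $c\uparrow\infty$, or apply a cutoff (a standard Calabi-type cutoff with $L$-controlled gradient, available under the curvature-dimension bound), to get a sequence of near-maxima of $\varphi u$ for suitable cutoffs $\varphi$; at such points $L(\varphi u)\le o(1)$ forces, via \eqref{eq_new2}, $B u^2 \le A u + o(u^2)$, so $u$ stays bounded — contradiction. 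Hence $\sup_M u=:S<\infty$. (I expect this step to be the main obstacle, since making the cutoff argument honest in the noncompact weighted setting is the one genuinely nontrivial point; everything else is bookkeeping.)

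\emph{Step 3: conclusion.} Now $u$ is bounded above, so Step~1 applies directly to $u$ itself: there exist $x_k$ with $u(x_k)\to S$ and $\limsup_k Lu(x_k)\le 0$. If $S=0$ the inequality $S\le A/B$ is trivial, so assume $S>0$; then for $k$ large $u(x_k)>0$, hence \eqref{eq_new2} gives
$$
Lu(x_k)\ \ge\ B\,u(x_k)^2 - A\,u(x_k).
$$
Taking $\limsup$ as $k\to\infty$ and using $\limsup_k Lu(x_k)\le 0$ together with $u(x_k)\to S$ yields
$$
0\ \ge\ B S^2 - A S,
$$
and dividing by $S>0$ gives $S\le A/B$, i.e. $\sup_M u\le A/B$, as claimed. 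This is exactly the shape in which the lemma will be applied later, with $u$ the top eigenvalue of $\mbox{\rm{Ric}}_\mu$ (or a regularization thereof), $B$ coming from the $4\,\mbox{\rm{Ric}}_\mu\odot g$ term in \eqref{Lric}, and $A$ from the lower-order contributions, so that $\sup u\le A/B$ together with a scaling/homogeneity argument forces $\mbox{\rm{Ric}}_\mu\le 0$.
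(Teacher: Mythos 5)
The paper itself contains no proof of Lemma \ref{bounded}: it is quoted from Fox \cite{Fox} (Theorem 3.3), with the Calabi--Osserman results \cite{Calabi1}, \cite{Oss} and the weighted Laplacian comparison of \cite{BQ} indicated as the ingredients. Your outline follows exactly that standard route (comparison estimate for $Lr$ under $\mbox{\rm{Ric}}_{\mu,N}\ge K$, then an Omori--Yau-type argument), and Steps 1 and 3 are sound, including the correct treatment of the set $\{u=0\}$ where (\ref{eq_new2}) is not assumed. The genuine gap is Step 2. As written, ``apply a cutoff \dots to get $Bu^2\le Au+o(u^2)$'' is not yet an argument, and the most obvious implementation does not close: applying the generalized maximum principle to a bounded composition such as $v=\arctan u$ gives, at the almost-maximum points $x_k$, only $Lu(x_k)\le(1+u(x_k)^2)\bigl(\tfrac1k+\tfrac{2u(x_k)}{k^2}\bigr)$, which is compatible with $Lu\ge Bu^2-Au$ whenever $u(x_k)$ grows faster than $k^2$, so no contradiction with $\sup u=+\infty$ is obtained without a more carefully tuned composition.

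The clean way to close the gap --- and the way Calabi, Osserman, Cheng--Yau and Fox actually proceed --- is to dispense with the boundedness/Omori--Yau dichotomy and run a single localized barrier argument that subsumes your Steps 1--3. Fix $x_0$, let $r$ be the distance to $x_0$, set $\phi=(R^2-r^2)^2$ and maximize $F=u\phi$ over $\overline{B_R(x_0)}$; if $u\not\equiv0$ there, the maximum is positive and attained at an interior point $x_1$ with $u(x_1)>0$ (Calabi's trick, or the viscosity interpretation mentioned after the lemma, handles the cut locus of $r$). At $x_1$ one has $\nabla u=-(u/\phi)\nabla\phi$, hence
$$
0\ \ge\ LF\ =\ \phi\,Lu-\frac{2u|\nabla\phi|^2}{\phi}+u\,L\phi ,
$$
and since $|\nabla\phi|^2/\phi=16r^2\le16R^2$ while the weighted comparison $rLr\le(N-1)+r\sqrt{(N-1)\max(-K,0)}$ gives $-L\phi\le C(N,K)(R^2+R^3)$, inserting (\ref{eq_new2}) yields $B\,F(x_1)\le A\,\phi(x_1)+C(N,K)(R^2+R^3)\le A\,R^4+C(N,K)(R^2+R^3)$. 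For any fixed $y$ one has $u(y)\le F(x_1)/(R^2-r(y)^2)^2$, and letting $R\to\infty$ gives exactly $u(y)\le A/B$ --- note that the quadratic term $Bu^2$ is precisely what makes the constant survive the limit (the Keller--Osserman mechanism), so boundedness of $u$ is a conclusion, not a hypothesis. With Step 2 replaced by this localization your proof is complete and agrees in substance with the argument the paper delegates to \cite{Fox}.
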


We remark that it is well-known that the assumption that $u$ is a $C^2$-function can be relaxed, and that in 
the case where $u$ is merely continuous, inequality (\ref{eq_new2}) should be interpreted in the viscosity  sense (see, for instance, Definition 2 in \cite{Calabi1}).

\medskip
{\bf Proof of Theorem \ref{mainth}.}
According to Proposition \ref{lgij}
$$
L g \ge 2 g \odot \mbox{\rm{Ric}}_{\mu}.
$$
By taking into account the fact that
$g = 4 \mbox{\rm{Ric}}_{\mu} - {2\alpha} h$, we can rewrite it as follows:
$$
 L  \mbox{\rm{Ric}}_{\mu} \ge 2  \mbox{\rm{Ric}}_{\mu}^2 - {\alpha}  \mbox{\rm{Ric}}_{\mu}.
$$

Let us estimate $L \lambda$ an for arbitrary point $x_0$, where $\lambda$ is the largest eigenvalue of
 $\mbox{\rm{Ric}}_{\mu}$.
One has $\lambda(x_0) =  (\mbox{\rm{Ric}}_{\mu})_{ij}(x_0) \eta^i \eta^j$
for some tangent unit vector $\eta$. Extend $\eta$ in such a way that
$$
\nabla \eta=0, \Delta \eta=0
$$
at $x_0$ (see, for instance,  Theorem 4.6 of \cite{CK}).
Next we note that the function
$
\lambda - (\mbox{\rm{Ric}}_{\mu})_{ij} \eta^i \eta^j
$
is non-negative and equals  zero at $x_0$. Hence $L \lambda -   L ( (\mbox{\rm{Ric}}_{\mu})_{ij} \eta^i \eta^j) \ge 0$
at $x_0$.
One obtains the following relation at $x_0$
$$
L \lambda \ge L ( (\mbox{\rm{Ric}}_{\mu})_{ij} \eta^i \eta^j) = L (\mbox{\rm{Ric}}_{\mu})  \eta^i \eta^j
\ge 2 \lambda^2 - \alpha \lambda.
$$
We will apply Lemma \ref{bounded}.
Note that the completeness of the space follows from Theorem \ref{CYth}.
In addition, it is easy to check (see \cite{kol}) that the tensor $\mbox{\rm{Ric}}_{\mu, 2n}$ is nonnegative,
thus Lemma \ref{bounded} is applicable.
We note that $\tilde{\lambda}= \lambda - \frac{1}{2} \alpha$ is the largest eigenvalue of $\frac{1}{4} g$, hence nonnegative.
The above inequality implies
$$
L \tilde{\lambda} \ge 2  \tilde{\lambda}^2 +  \alpha \tilde{\lambda}.
$$
From Lemma \ref{bounded} it follows that $\tilde{\lambda} \le - \frac{\alpha}{2}$, hence $\lambda \le 0$.




\subsection{Cone case}

Let us analyse the case when $\Omega$ is a cone. Then $\Phi$
is logarithmically homogeneous. This implies, in particular, the following relation:
$$
\Phi_i x^i = 2(n / \alpha)
$$
Differentiating this relation twice one gets
$$
\Phi_{ij} x^j + \Phi_i = 0
$$
$$
2 \Phi_{ij} + \Phi_{ijk} x^k = 0.
$$
We obtain from the first identity $x^j = -\Phi^{ij} \Phi_i$. Substituting this into the second identity, one gets the following Lemma.
\begin{lemma}
\label{zerohess}
Assume that  $\Phi$ is logarithmically homogeneus (this is the case when $\Phi$ solves (\ref{HKE}) in a cone for some negative $\alpha$). Then the following relation holds:
$$
\nabla^2_h \Phi_{ij} = \Phi_{ij} - \frac{1}{2} \Phi^k \Phi_{ijk} =0.
$$
This implies
$$
\nabla^2_h P=\frac{1}{2} \nabla^2_h \Phi_{ij} = 0
$$
and
$$
\mbox{\rm Ric} = \mbox{\rm Ric}_{\mu}
$$
provided $W$ is constant.
\end{lemma}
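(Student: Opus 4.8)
The plan is to convert the logarithmic homogeneity of $\Phi$ into a pointwise algebraic identity among $\Phi_{ij}$, $\Phi_{ijk}$ and the first derivatives $\Phi_i$, and then to recognize the left-hand side of that identity as the Riemannian Hessian $\nabla^2_h \Phi$ by means of the formula $\nabla^2_h f_{ij} = f_{ij} - \tfrac12 \Phi^k_{ij} f_k$ recorded at the end of Section~\ref{sec2}. Once $\nabla^2_h \Phi = 0$ is established, the statements about $P$ and about $\mathrm{Ric} = \mathrm{Ric}_\mu$ follow by inspecting how $P$ and the Bakry--\'Emery correction depend on $\Phi$ in the setting of~(\ref{HKE}).

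First I would differentiate the homogeneity relation $\Phi(\lambda x) = \Phi(x) + 2(n/\alpha)\log\lambda$ with respect to $\lambda$ and set $\lambda = 1$, obtaining Euler's identity $\Phi_i(x)\, x^i = 2n/\alpha$, a constant. Differentiating this in $x^k$ once gives $\Phi_{ik} x^i + \Phi_k = 0$, i.e. $\Phi_{ij} x^j = -\Phi_i$; differentiating a second time gives $\Phi_{ijk} x^k + 2\Phi_{ij} = 0$. Since $D^2\Phi$ is invertible ($\Phi$ being a strictly convex solution of the Monge--Amp\`ere equation), the first-order relation yields $x^j = -\Phi^{jk}\Phi_k$, and substituting this into the second-order relation produces
\[
2\Phi_{ij} - \Phi_{ijk}\,\Phi^{k\ell}\Phi_\ell = 0, \qquad\text{equivalently}\qquad \Phi_{ij} - \tfrac12\,\Phi^k \Phi_{ijk} = 0 .
\]
By the covariant-Hessian formula of Section~\ref{sec2} applied to $f = \Phi$ (using $\Phi^k_{ij}\Phi_k = \Phi_{ijk}\Phi^k$), the left-hand side is precisely $\nabla^2_h \Phi_{ij}$, so $\nabla^2_h \Phi = 0$.

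For the consequences: in the setting of~(\ref{HKE}) the density of $\mu$ with respect to Lebesgue measure is $e^{-\alpha\Phi}$, hence $V = \alpha\Phi$, and if $\nu$ is Lebesgue (equivalently $W$ is constant) then $P = \tfrac12\bigl(V + W(\nabla\Phi)\bigr)$ differs from a scalar multiple of $\Phi$ by an additive constant; therefore $\nabla^2_h P$ is a constant multiple of $\nabla^2_h \Phi$ and vanishes. Since $\mathrm{Ric}_\mu = \mathrm{Ric} + \nabla^2_h P$ by the formula in Section~\ref{sec2}, we get $\mathrm{Ric}_\mu = \mathrm{Ric}$ whenever $W$ is constant. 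There is no substantial obstacle here: the argument is a short computation, and the only points demanding care are the index bookkeeping while differentiating Euler's relation, the use of invertibility of $D^2\Phi$ to solve for $x^j$, and matching the $\tfrac12$-normalization of the covariant Hessian so that the resulting identity reads exactly $\nabla^2_h \Phi = 0$.
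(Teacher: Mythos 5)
Your proof is correct and follows essentially the same route as the paper: Euler's identity $\Phi_i x^i = 2n/\alpha$ from logarithmic homogeneity, two differentiations, elimination of $x^j = -\Phi^{jk}\Phi_k$, and identification of the resulting identity with the covariant Hessian formula $\nabla^2_h f_{ij} = f_{ij} - \tfrac12 \Phi^k_{ij} f_k$. The concluding remarks on $P$ and $\mathrm{Ric}_\mu$ also match the paper's (implicit) reasoning, and your phrasing of $\nabla^2_h P$ as a constant multiple of $\nabla^2_h\Phi$ is if anything slightly more careful about the normalization.
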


{\bf Proof of Corollary \ref{sph}}.
The statement $\mbox{\rm Ric} \le 0$ follows immediately from Theorem \ref{mainth} and Lemma \ref{zerohess}.
To prove the statement for a level set $M = \{\Phi = c\}$ we use the following formula (see, for instance, Lemma 7.1
in \cite{KolMil})
$$
\mbox{\rm{Ric}}_{M} = \bigl( \mbox{\rm{Ric}} - \mbox{\rm{R}}(\cdot, \eta, \cdot,\eta) \bigr)|_{TM} + \bigl(H h|_{TM} - II_{M})II_{M}.
$$
 Here $\eta$ is the unit normal to $M$, $II_{M}$ is the second fundamental form of $M$, and $H$ is mean curvature of $M$.
 Since $\nabla^2_h \Phi = 0$ and $M = \{ \Phi = c\}$, necessarily $II_{M}=0$. Note that $\eta = x/|x|_{h}$ and
 $$4\mbox{\rm{R}}(i, x, j,x)=\Phi_{iax} \Phi_{jx}^{a} - \Phi_{axx} \Phi^{a}_{ij}.$$
 Taking into account that $\Phi_{iax}= - 2 \Phi_{ia}$ we easily get $\mbox{\rm{R}}(i, x, j,x) = \nabla^2_h \Phi_{ij}=0$.
Hence $\mbox{\rm{Ric}}_{M} = \mbox{\rm{Ric}}|_{TM} \le 0$.

\section{Application to log-concave measures}
In this section we deal with a couple of probability measures
$$
\mu = e^{-V} dx, \ \nu = e^{-W} dx
$$
and the solution $T = \nabla \Phi$ of the corresponding optimal transportation problem.
The functions $V,W$, and $\Phi$, are assumed to be sufficciently smooth (by the regularity theory for the Monge-Amp\`ere equation the smoothness of $\Phi$ follows from the smoothness of
the potentials under certain convexity assumptions on supports of $\mu, \nu$).

The contraction theorem of L.~Caffarelli has numerous applications in probability and analysis.
It can be stated in the following form (see \cite{Kol2010}, \cite{Kol2011}): under the assumption
$$
D^2 V \le C, \ D^2 W \ge c,
$$
where $c, C$ are positive constants, the potential $\Phi$ satisfies
$$D^2 \Phi \le \sqrt{\frac{C}{c}}.
$$
In this section we prove a kind of extension of this result to the third-order derivatives.

Given a quadratic form $Q$ we denote by $\| Q\|_{h}$ its Riemannian norm:
$$
\| Q\|_{h} = \sup_{v: h(v,v)=1} Q(v,v).
$$

Let us recall that
$$
V_{ij} = \partial^2_{x_i x_j} V, \ V_{ijk} = \partial^3_{x_i x_j x_k} V
$$
{\bf but}
$$
W_{ij} = \sum_{a,b} \Phi_{ai} \Phi_{bj} \cdot \partial^2_{x_b x_a} W, \ W_{ijk} =  \sum_{a,b,c} \Phi_{ai} \Phi_{bj} \Phi_{ck} \cdot \partial^3_{x_a x_b x_c} W.
$$
\begin{proposition}
\label{3-est}
Assume that the matrix with  entries
$V_{ij} + W_{ij}$ is positive semi-definite (this holds, in particular, when both measures are log-concave). Then
the following inequality holds:
\begin{equation}
\label{HLg}
  \| H \|_{h} + 2 L \|g\|_{h} \ge   \|g\|^2_{h}.
\end{equation}
Here
$$
H_{ij} = \mbox{\rm{Tr}} \Bigl[ \bigl[  (V+W)^{(2)} \bigr]^{-1} (V-W)^{(3)}_{i} (D^2 \Phi)^{-1}(V-W)^{(3)}_{j} \Bigr],
$$
$(V-W)^{(3)}_{i}$, $(V+W)^{(2)}$ are the matrices with the entries
$
V_{iab} - W_{iab}
$, $V_{ab} + W_{ab}$ respectively.
\end{proposition}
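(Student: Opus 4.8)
The plan is to run the maximum-principle argument used for Theorem~\ref{mainth}, now applied to the largest eigenvalue of the Calabi tensor $g$, after substituting the \emph{general} formula for $L g_{ij}$ from Proposition~\ref{lgij}. Fix an arbitrary point $x_0$. Since $g\ge 0$, the number $\lambda=\|g\|_h(x_0)\ge 0$ is the top eigenvalue of $g$ with respect to $h$ at $x_0$, attained by a unit vector $\eta$, so that $h(\eta,\eta)=1$, $g_{ij}\eta^j=\lambda\, h_{ij}\eta^j$ and $g_{ij}\eta^i\eta^j=\lambda$. First I would extend $\eta$ to a vector field with $\nabla\eta=0$ and $\Delta\eta=0$ at $x_0$ (Theorem 4.6 in \cite{CK}) and set
$$
\phi(x)=\frac{g_{ij}(x)\,\eta^i(x)\eta^j(x)}{h_{kl}(x)\,\eta^k(x)\eta^l(x)} .
$$
This is a $C^2$ function with $\phi\le\|g\|_h$ in a neighbourhood of $x_0$ and $\phi(x_0)=\|g\|_h(x_0)$, i.e.\ a lower barrier for $\|g\|_h$ at $x_0$; hence it suffices to prove $2L\phi(x_0)+\|H\|_h(x_0)\ge\phi(x_0)^2$, the same inequality then holding for $\|g\|_h$ itself (literally at points of twice-differentiability of $\|g\|_h$, and in the viscosity sense otherwise, exactly as in the remark following Lemma~\ref{bounded}). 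Since $\nabla\eta$ and $\Delta\eta$ vanish at $x_0$ and the metric is parallel, the denominator of $\phi$ has vanishing gradient and weighted Laplacian at $x_0$, so $L\phi(x_0)=(Lg)_{ij}\eta^i\eta^j$, and everything reduces to bounding this quantity from below.

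Next I would contract the identity of Proposition~\ref{lgij} with $\eta^i\eta^j$ and simplify using $g_{ij}\eta^j=\lambda\, h_{ij}\eta^j$. The term $\tfrac12 g_{ki}g^k_j\eta^i\eta^j$ becomes exactly $\tfrac12\lambda^2$; the term $\tfrac12\bigl((V_{is}+W_{is})g^s_j+(V_{js}+W_{js})g^s_i\bigr)\eta^i\eta^j$ becomes $\lambda\,(V+W)^{(2)}(\eta,\eta)\ge 0$ by hypothesis; and the two terms $2\nabla_p\Phi_{iab}\nabla^p\Phi_j^{ab}\,\eta^i\eta^j$ and $8\,\mathrm{R}_{iabc}\mathrm{R}_j^{abc}\,\eta^i\eta^j$ are squared norms of tensors (using $\nabla\eta=0$ at $x_0$), hence non-negative, and may be dropped. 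Writing $S_{ab}=\Phi_{iab}\eta^i$ and $T_{ab}=(W_{iab}-V_{iab})\eta^i$, what survives is
$$
(Lg)_{ij}\eta^i\eta^j\ \ge\ \tfrac12\lambda^2+\lambda\,(V+W)^{(2)}(\eta,\eta)+2\,T_{ab}S^{ab}+2\,(V_{am}+W_{am})\,\Phi^m_{ib}\Phi^{ab}_j\,\eta^i\eta^j .
$$

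The heart of the matter is that the last two terms, together with $H_{\eta\eta}:=H_{ij}\eta^i\eta^j$, complete a square. Writing $A=(V+W)^{(2)}$ and $\Phi''=D^2\Phi$ and conjugating by $(\Phi'')^{-1/2}$, a direct index computation gives
$$
2\,(V_{am}+W_{am})\Phi^m_{ib}\Phi^{ab}_j\eta^i\eta^j=2\,\mathrm{Tr}(\widehat A\,\widehat S^{\,2}),\qquad 2\,T_{ab}S^{ab}=2\,\mathrm{Tr}(\widehat T\,\widehat S),\qquad H_{\eta\eta}=\mathrm{Tr}(\widehat A^{-1}\widehat T^{\,2}),
$$
where $\widehat A=(\Phi'')^{-1/2}A(\Phi'')^{-1/2}\ge 0$, $\widehat S=(\Phi'')^{-1/2}S(\Phi'')^{-1/2}$, $\widehat T=(\Phi'')^{-1/2}T(\Phi'')^{-1/2}$ are symmetric matrices. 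The elementary identity
$$
4\,\mathrm{Tr}(\widehat A\,\widehat S^{\,2})+4\,\mathrm{Tr}(\widehat T\,\widehat S)+\mathrm{Tr}(\widehat A^{-1}\widehat T^{\,2})=\bigl\| \, 2\,\widehat S\,\widehat A^{1/2}+\widehat T\,\widehat A^{-1/2} \, \bigr\|_{\mathrm{HS}}^2\ \ge\ 0
$$
then finishes it: doubling the displayed lower bound for $(Lg)_{ij}\eta^i\eta^j$, adding $H_{\eta\eta}$, and invoking this identity together with $\lambda\,(V+W)^{(2)}(\eta,\eta)\ge 0$, we obtain $2(Lg)_{ij}\eta^i\eta^j+H_{\eta\eta}\ge\lambda^2$. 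Since $\eta$ is a unit vector, $H_{\eta\eta}\le\|H\|_h(x_0)$, and therefore $2L\phi(x_0)+\|H\|_h(x_0)\ge\lambda^2=\|g\|_h(x_0)^2=\phi(x_0)^2$, which is $(\ref{HLg})$.

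The hard part will be the bookkeeping in the third step: contracting all of the Calabi-type terms of Proposition~\ref{lgij} against the eigenvector $\eta$ while tracking index positions so that the $D^2\Phi$-factors line up and permit the simultaneous conjugation by $(\Phi'')^{-1/2}$, and then recognizing which precise combination completes the square — this is exactly what forces the definition of the tensor $H$. The remaining ingredients, namely the maximum-principle technicalities (non-smoothness of the top eigenvalue, the barrier $\phi$, the vector-field extension), are routine and are handled just as in the proof of Theorem~\ref{mainth}.
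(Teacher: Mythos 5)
Your proposal is correct and follows essentially the same route as the paper: drop the two manifestly non-negative terms in Proposition \ref{lgij}, absorb the cross term $2T_{ab}S^{ab}$ into the quadratic term $2(V_{am}+W_{am})\Phi^m_{ib}\Phi^{ab}_j$ at the cost of $\tfrac12 H$ via the matrix Cauchy inequality $4\,\mathrm{Tr}(XY)\le 4\,\mathrm{Tr}(QX^2)+\mathrm{Tr}(Q^{-1}Y^2)$ (your Hilbert--Schmidt complete-the-square identity is exactly this inequality with $Q=\widehat A$, $X=\widehat S$, $Y=-\widehat T$, and correctly observes that only $Q\ge 0$ is needed, not $X,Y\ge 0$), and then run the top-eigenvalue maximum-principle contraction as in Theorem \ref{mainth}. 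The only cosmetic difference is that you contract with the eigenvector before completing the square while the paper establishes the tensor inequality $H_{ij}+2Lg_{ij}\ge (V_{is}+W_{is})g^s_j+(V_{js}+W_{js})g^s_i+g_{ki}g^k_j$ first; the content is identical.
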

\begin{proof}
With some abuse of notation, when we write $V_{ij} \geq 0$ we mean that the symmetric matrix $(V_{ij})_{i,j=1}^n$ is positive semi-definite.
According to Proposition \ref{lgij}
\begin{align*}
L g_{ij} & \ge (-V_{iab} + W_{iab}) \Phi^{ab}_j +  (-V_{jab} + W_{jab}) \Phi^{ab}_i \\&
  + \frac{1}{2} \Bigl(  (V_{is} + W_{is}) g^{s}_j + (V_{js} + W_{js}) g^{s}_i\Bigr)+ 2(V_{am} + W_{am}) \Phi^{m}_{ib} \Phi^{ab}_j
 +\frac{1}{2} g_{ki} g^{k}_{j} .
\end{align*}
First we note that
\begin{equation}
\label{VW1}
2 (V_{iab} - W_{iab}) \Phi_i^{ab}
= 2 \mbox{\rm{Tr}} \Bigl[ (D^2 \Phi)^{-1} (V-W)^{(3)}_{i} (D^2 \Phi)^{-1} D^2 \Phi_{e_i}\Bigr],
\end{equation}
\begin{equation}
\label{VW2}
2(V_{am} + W_{am}) \Phi^{m}_{ib} \Phi^{ab}_i =
2 \mbox{\rm{Tr}} \Bigl[ (D^2 \Phi)^{-1} (V+W)^{(2)} (D^2 \Phi)^{-1} A_i \Bigr],
\end{equation}
where   $A_i$ is the  matrix with the entries $\Phi_{iac} \Phi_{ib}^{c}$.
We apply the Cauchy inequality
$$
4\mbox{\rm{Tr}} \bigl( XY \bigr) \le  4 \mbox{\rm{Tr}} \bigl( Q X^2 \bigr) +   \mbox{\rm{Tr}} \bigl( Q^{-1} Y^2 \bigr)
$$
which is valid for non-negative symmetric matrices $Q, X,Y$.
Setting
$$
X = (D^2 \Phi)^{-1/2} D^2 \Phi_{e_i} (D^2 \Phi)^{-1/2}, \
Y =  (D^2 \Phi)^{-1/2} (V-W)^{(3)}_{i} (D^2 \Phi)^{-1/2},
$$
$$
Q = (D^2 \Phi)^{-1/2} (V+W)^{(2)} (D^2 \Phi)^{-1/2}
 $$
 one gets
\begin{align*}
2 \mbox{\rm{Tr}} & \Bigl[ (D^2 \Phi)^{-1} (V-W)^{(3)}_{i} (D^2 \Phi)^{-1} D^2 \Phi_{e_i}\Bigr]
\\&
\le \frac{1}{2} \mbox{\rm{Tr}} \Bigl[ \bigl[ (D^2 \Phi)^{-1/2} (V+W)^{(2)} (D^2 \Phi)^{-1/2} \bigr]^{-1}\bigl( (D^2 \Phi)^{-1/2} (V-W)^{(3)}_{i} (D^2 \Phi)^{-1/2} \bigr)^{2}\Bigr]
\\& +
2 \mbox{\rm{Tr}}\Bigl[  \bigl[ (D^2 \Phi)^{-1/2} (V+W)^{(2)} (D^2 \Phi)^{-1/2} \bigr]   \bigl( (D^2 \Phi)^{-1/2} D^2 \Phi_{e_i} (D^2 \Phi)^{-1/2}\bigr)^2 \Bigr].
\end{align*}

 \begin{figure}[h!]
\begin{center}

\begin{tikzpicture}[show background rectangle,>=stealth',shorten >=1pt,auto,node distance=1.3cm,
  thick,main node/.style={circle,draw,inner sep=1pt,minimum size=1pt}]

  \node[main node] (A) {$\Phi$};
  \draw (A) -- ++(0,-1.5);
  \draw[dotted] (A) -- ++(0.5,-1.5);
  \draw[dotted] (A) -- ++(-0.5,-1.5);
  \draw (A) -- ++(0.35,-1.05);
  \draw (A) -- ++(-0.35,-1.05);
  \node at ($(A) + (0, -2)$) {$X$};

  \node[main node] (B) [right= 1.5 of A]{$V-W$};
  \draw (B) -- ++(0,-1.5);
  \draw[dotted] (B) -- ++(0.5,-1.5);
  \draw[dotted] (B) -- ++(-0.5,-1.5);
  \draw (B) -- ++(0.35,-1.05);
  \draw (B) -- ++(-0.35,-1.05);
  \node at ($(B) + (0, -2)$) {$Y$};

  \node[main node] (C) [right= 1.5 of B]{$V+W$};
  \draw[dotted] (C) -- ++(0.5,-1.5);
  \draw[dotted] (C) -- ++(-0.5,-1.5);
  \draw (C) -- ++(0.35,-1.05);
  \draw (C) -- ++(-0.35,-1.05);
  \node at ($(C) + (0, -2)$) {$Q$};

  \node[main node] (D) [right=1.5 of C] {$\Phi$};
  \node[main node] (E) [right=1.2 of D] {$\Phi$};
  \draw (D) -- (E);
  \draw[dotted] (D) -- ++(-0.5,-1.5);
  \draw (D) -- ++(-0.35,-1.05);
  \draw (D) -- ++(0.5,-1.5);
  \draw[dotted] (E) -- ++(0.5,-1.5);
  \draw (E) -- ++(0.35,-1.05);
  \draw (E) -- ++(-0.5,-1.5);
  \node at ($(D) + (1, -2)$) {$X^2$};

\end{tikzpicture}

\end{center}
\caption{We let the reader guess the meaning of these operator-valued tensors.}
\end{figure}
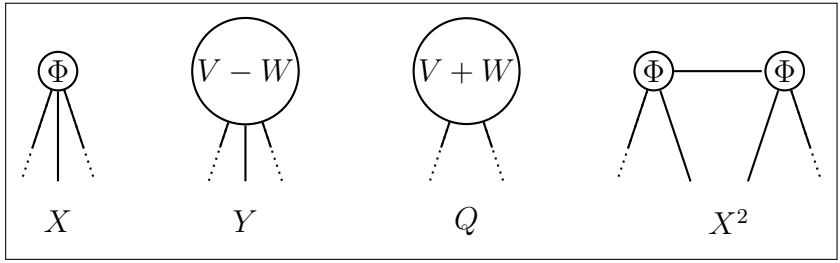

Taking into account (\ref{VW1}), (\ref{VW2}) we  rewrite this relation as follows:
$$
2 (V_{iab} - W_{iab}) \Phi_i^{ab} \le 2(V_{am} + W_{am}) \Phi^{m}_{ib} \Phi^{ab}_i + \frac{1}{2} H_{ab}.
$$
This readily implies the following inequality:
$$
 H_{ij} + 2 L g_{ij} \ge (V_{is} + W_{is}) g^{s}_j + (V_{js} + W_{js}) g^{s}_i+  g_{ki} g^{k}_{j}.
$$
The differential inequality for the corresponding norm $\|g\|$ can be obtained in the same way as in the proof of Theorem \ref{mainth}.
It remains to note that given the eigenvector $v$ of $g$ which corresponds to the largest eigenvalue $\lambda$
one has $H_{vv} \le \|H\|_{h}$, $\bigl( (V_{is} + W_{is}) g^{s}_j \bigr)(v,v) = \lambda (V_{vv} + W_{vv}) \ge 0$.
\end{proof}

\begin{lemma}
Assume that the measure $\mu$ has full support and  $V_{ij} + W_{ij} \ge 0$.
Assume, in addition,  that there exists $p>2$ such that  the Euclidean operator norm $\| (D^2 \Phi)^{-1}\|$  of $(D^2 \Phi)^{-1}$ belongs to $L^{p'}(\mu)$ with $p'>p$. Then
$$
\int  \|g\|^{p}_h \ d\mu \le \int  \|H\|^{\frac{p}{2}}_{h} d \mu.
$$
\end{lemma}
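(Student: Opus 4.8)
\medskip
The whole argument rests on inequality (\ref{HLg}) of Proposition~\ref{3-est}. Write $u:=\|g\|_h$, so $u\ge 0$ is continuous and, since $g\ge 0$, $u$ is the largest eigenvalue of $g$ with respect to $h$. Contracting the tensor bound
$$
H_{ij}+2Lg_{ij}\ \ge\ (V_{is}+W_{is})g^{s}_{j}+(V_{js}+W_{js})g^{s}_{i}+g_{ki}g^{k}_{j}
$$
with the $h$‑unit eigenvector $v$ of $g$ at $u$, extended so that $\nabla v=0$ and $\Delta v=0$ at the base point (as in the proof of Theorem~\ref{mainth}), and dropping the non‑negative terms $2u\,(V_{vv}+W_{vv})$ (here one uses $V^{(2)}+W^{(2)}\ge 0$), one obtains, in the barrier sense of the remark after Lemma~\ref{bounded},
$$
2Lu\ \ge\ u^{2}-\|H\|_{h}\qquad\text{on }\ \mathrm{supp}\,\mu.
$$
This is exactly (\ref{HLg}), and it is the only place the first hypothesis enters.

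\medskip
We may assume $\int\|H\|_h^{p/2}\,d\mu<\infty$, else there is nothing to prove. Fix $q=p-1>1$. Since $t\mapsto t^{q}$ is non‑decreasing and convex on $[0,\infty)$, the chain rule (valid in the barrier sense for $p>2$, cf.\ \cite{Calabi1}) upgrades the scalar inequality to
$$
L\bigl(u^{p-1}\bigr)\ \ge\ \frac{p-1}{2}\Bigl(u^{p}-u^{p-2}\|H\|_{h}\Bigr).
$$
Differentiating the logarithm of (\ref{MA}) shows that the weighted Laplacian is in divergence form with respect to $\mu$, namely $Lf=e^{V}\,\mathrm{div}\!\left(e^{-V}(D^{2}\Phi)^{-1}\nabla f\right)$. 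Granting that $\int L(u^{p-1})\,d\mu=0$ (the delicate point, discussed below) and integrating the displayed inequality gives
$$
\int u^{p}\,d\mu\ \le\ \int u^{p-2}\,\|H\|_{h}\,d\mu .
$$
Now apply H\"older's inequality with exponents $\tfrac{p}{p-2}$ and $\tfrac{p}{2}$:
$$
\int u^{p-2}\,\|H\|_{h}\,d\mu\ \le\ \Bigl(\int u^{p}\,d\mu\Bigr)^{\frac{p-2}{p}}\Bigl(\int\|H\|_{h}^{p/2}\,d\mu\Bigr)^{\frac{2}{p}} .
$$
Combining the last two displays and cancelling the factor $\bigl(\int u^{p}\,d\mu\bigr)^{(p-2)/p}$ (the statement is trivial if it vanishes, and the quantity is finite by the discussion below, the exponent $(p-2)/p$ being $<1$) yields $\int u^{p}\,d\mu\le\int\|H\|_{h}^{p/2}\,d\mu$, which is the claim.

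\medskip
The main obstacle is the identity $\int L(u^{p-1})\,d\mu=0$ together with the a priori finiteness of the integrals above. Unlike the K\"ahler--Einstein setting of Theorem~\ref{mainth}, the metric $h$ need not be complete here, so one argues with a smooth compactly supported Euclidean cut‑off $\chi_{R}(x)=\theta(|x|/R)$, $\theta\equiv 1$ on $[0,1]$, $\theta\equiv 0$ on $[2,\infty)$. Pairing $L(u^{p-1})$ with $\chi_R$ and integrating by parts produces an error term supported in the annulus $\{R\le|x|\le 2R\}$ and bounded there, up to a factor $R^{-1}$, by $\|(D^{2}\Phi)^{-1}\|\,u^{p-2}|\nabla u|$; a Caccioppoli‑type estimate obtained by testing the scalar inequality against $u^{p-2}\chi_R^{2}$ (using $|\nabla\chi_R|_h^{2}\le CR^{-2}\|(D^{2}\Phi)^{-1}\|$) controls $u^{p-3}|\nabla u|^{2}$ by integrals of $u^{p}$ and of $u^{p-1}\|(D^{2}\Phi)^{-1}\|$. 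Since $\mu$ is a probability measure and $\|(D^{2}\Phi)^{-1}\|\in L^{p'}(\mu)\subset L^{p}(\mu)$ with $p'>p$ — the room $p'>p$ being exactly what is needed in a H\"older splitting on the annulus — these annular integrals are finite and tend to $0$ as $R\to\infty$; a standard iteration along the way gives $\int u^{p}\,d\mu<\infty$, and letting $R\to\infty$ gives $\int L(u^{p-1})\,d\mu=0$. This tail analysis is the only non‑formal ingredient; everything else is the manipulation above together with the viscosity‑sense chain rule, handled as in \cite{Calabi1} and in the proof of Theorem~\ref{mainth}.
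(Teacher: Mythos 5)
Your overall strategy coincides with the paper's: contract the tensor inequality of Proposition \ref{3-est} with the top eigenvector of $g$ to get the scalar inequality $2L\Lambda \ge \Lambda^2 - \|H\|_h$ for $\Lambda = \|g\|_h$, test it against $\Lambda^{p-2}$ times a cut-off, integrate by parts against $\mu$ (your divergence-form identity for $L$ is correct and is exactly what the integration by parts in the paper rests on), and finish with H\"older with exponents $p/(p-2)$ and $p/2$. The purely formal part of your argument is fine.

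The gap is in the step you yourself flag as delicate, and the sketch you give does not repair it. You integrate the pointwise inequality $L(u^{p-1}) \ge \frac{p-1}{2}(u^{p}-u^{p-2}\|H\|_h)$ over all of $\mathbb{R}^n$, which presupposes that $\int u^{p}\,d\mu$ is finite and that $\int L(u^{p-1})\,d\mu =0$. Your justification of the latter bounds the annular error by quantities involving $\int u^{p-3}|\nabla_h u|^2\,d\mu$ and $\int u^{p}\,d\mu$, and your Caccioppoli estimate for the first of these is in turn controlled by $\int u^{p}\,d\mu$ --- the very quantity whose finiteness is the substance of the lemma (no global bound on $u$ is assumed). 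The ``standard iteration'' that would break this circle is not supplied. The paper avoids the issue by never decoupling the two steps: it multiplies the differential inequality by $\xi \Lambda^{p-2}$ with $\xi$ smooth and compactly supported, so every integral in sight is finite, absorbs the cross term $-2\int \langle \nabla_h \xi, \nabla_h \Lambda\rangle_h \Lambda^{p-2}\,d\mu$ into the good term $2(p-2)\int \xi \Lambda^{p-3}\|\nabla_h\Lambda\|^2\,d\mu$ produced by the integration by parts, and arrives at
$$
(1-\varepsilon)\int \xi \Lambda^{p}\,d\mu \le c(\varepsilon,p)\int \Bigl|\frac{\nabla_h \xi}{\xi}\Bigr|_h^{2p}\xi\,d\mu + \Bigl(\int \|H\|_h^{p/2}\xi\,d\mu\Bigr)^{2/p}\Bigl(\int \Lambda^{p}\xi\,d\mu\Bigr)^{(p-2)/p}.
$$
Since the exponent $(p-2)/p$ is strictly less than $1$, this inequality by itself forces $\sup_n \int \xi_n \Lambda^{p}\,d\mu < \infty$ along any admissible sequence of cut-offs; that is how the a priori finiteness is obtained, and then one lets $\xi_n \nearrow 1$ and $\varepsilon \to 0$. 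The price is that the cut-offs must satisfy $\int |\nabla_h \xi_n/\xi_n|_h^{2p}\xi_n\,d\mu \to 0$ --- note the power $2p$ and the ratio $\nabla \xi_n/\xi_n$, which blows up where $\xi_n$ vanishes, so a generic rescaled bump $\theta(|x|/R)$ does not obviously work. This is precisely where the hypothesis $\|(D^2\Phi)^{-1}\| \in L^{p'}(\mu)$ with $p'>p$ is consumed, via $|\nabla_h \xi|_h^2 \le \|(D^2\Phi)^{-1}\|\,|\nabla \xi|^2$, H\"older with exponents $p'/p$ and $p'/(p'-p)$, and a specially constructed profile $\eta$ with $\sup_x \eta(x)|\eta'(x)/\eta(x)|^m < \infty$ for every $m$. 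If you restructure your argument so that the cut-off sits inside the test function from the start rather than being invoked a posteriori, your proof becomes the paper's.
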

\begin{proof}
Set for brevity $\Lambda = \|g\|_h$.
Take a compactly supported smooth nonnegative function $\xi$. Multiply
(\ref{HLg}) by $\xi \Lambda^{p-2}$ and integrate over $\mu$.
$$
\int \|H\|_{h}  \Lambda^{p-2} \xi d \mu + 2 \int \xi L( \Lambda)  \Lambda^{p-2}  \ge \int \xi  \Lambda^p d \mu.
 $$
 Integrating by parts we get
 $$
 \int \|H\|_{h}  \Lambda^{p-2} \xi d \mu  - 2 \int \langle\nabla_h \xi, \nabla_h  \Lambda \rangle_h  \Lambda^{p-2}  d \mu \ge \int \xi  \Lambda^p d \mu
 + 2 (p-2) \int \xi  \Lambda^{p-3} \|\nabla_h  \Lambda \|^2 d \mu.
 $$
 Next
 $$
 - 2 \int \langle\nabla_h \xi, \nabla_h  \Lambda \rangle_h  \Lambda^{p-2}  d \mu
 \le 2 (p-2) \int \xi \|\nabla_h  \Lambda \|^2     \Lambda^{p-3}  d \mu + \frac{1}{2(p-2)} \int \frac{\|\nabla_h \xi\|^2_h}{\xi}  \Lambda^{p-1}  d \mu.
 $$
 This estimate yields
 $$
 \int \|H\|_{h}  \Lambda^{p-2} \xi d \mu  + \frac{1}{2(p-2)} \int \frac{\|\nabla_h \xi\|^2_h}{\xi}  \Lambda^{p-1}  d \mu  \ge \int \xi  \Lambda^p d \mu.
 $$
 For every $\varepsilon>0$ one gets by the H{\"o}lder inequality
 $$
 \frac{1}{2(p-2)} \int \frac{\|\nabla_h \xi\|^2_h}{\xi}  \Lambda^{p-1}  d \mu \le \int \varepsilon \Lambda^p \xi d\mu + c(\varepsilon,p) \int \Bigl| \frac{\nabla_h \xi}{\xi}\Bigr|_h^{2p} \xi d \mu.
 $$
 Finally, one has for every $\varepsilon>0$
 \begin{align*}
 \label{LHp}
 (1-\varepsilon) \int \xi  \Lambda^p d \mu & \le c(\varepsilon,p) \int \Bigl| \frac{\nabla_h \xi}{\xi}\Bigr|_h^{2p} \xi d \mu + \int \|H\|_{h}  \Lambda^{p-2} \xi d \mu
 \\&
 \le  c(\varepsilon,p) \int \Bigl| \frac{\nabla_h \xi}{\xi}\Bigr|_h^{2p} \xi d \mu +  \Bigl( \int \|H\|^{\frac{p}{2}}_{h} \xi d \mu\Bigr)^{\frac{2}{p}} \Bigl( \int  \Lambda^{p} \xi d \mu\Bigr)^{\frac{p-2}{p}}.
 \end{align*}
 It remains to show that there exists a sequence of non-negative smooth compactly supported functions $\{\xi_n\}$
 with the properties
 $$
 \xi_n \nearrow 1 \
 \mbox{\rm{pointwise}},\ \ \lim_n  \int \Bigl| \frac{\nabla_h \xi_n}{\xi_n}\Bigr|_h^{2p} \xi_n d \mu =0.
 $$
 Estimating
 $$
 | \nabla_h \xi_n|^2_h \le \| (D^2 \Phi)^{-1}\| |\nabla \xi_n|^2
 $$
 and applying H{\"o}lder inequality we get that it is enough to have
  $$
 \xi_n \nearrow 1  \
 \mbox{\rm{pointwise}}, \ \ \lim_n  \int \Bigl| \frac{\nabla \xi_n}{\xi_n}\Bigr|^{m} \xi_n d \mu =0
 $$
 for any (or sufficiently big) $m>2$. To
 solve this problem in dimension one
 we find a  non-negative compactly supported function
 $\eta$ with the properties
 $$
 \eta|_{\{x: |x| \le 1\}} =1, \ \eta(-x)=\eta(x), \ \sup_x \eta(x) \Bigl| \frac{\eta'(x)}{\eta(x)} \Bigr|^m < C_m < \infty, \ \forall m>0.
 $$
 The construction of such a function is standard.
 To obtain the desired sequence set $\xi^{(1)}_n(x)=1$ provided $|x| \le n$ and $\xi^{(1)}_n(x) = \eta(|x|-n+1)$ provided $|x| \ge n$.
 In the multidimensional case set $\xi_n(x) = \prod_{i=1}^n \xi^{(1)}_n(x_i)$.
\end{proof}

Letting $p$ to $\infty$ we get
\begin{corollary}
\label{gsqrtK}
Assume that the measures $\mu$ and $\nu$ are log-concave, $\mu$ has full support, and the Euclidean operator norm $\| (D^2 \Phi)^{-1}\|$ of $(D^2 \Phi)^{-1}$ is integrable
in any power.
Moreover, assume that
$$
\| H\|_{h} \le K
$$
for some constant $K>0$.
Then
$$
\|g\|_{h} \le \sqrt{K}.
$$
\end{corollary}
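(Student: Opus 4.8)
The plan is to obtain the corollary by letting $p \to \infty$ in the preceding Lemma. The first point to check is that the hypotheses of that Lemma hold for \emph{every} finite $p > 2$: log-concavity of $\mu$ and $\nu$ gives $V_{ij} + W_{ij} \ge 0$, so Proposition \ref{3-est} and hence the Lemma apply; $\mu$ has full support by assumption; and since $\|(D^2\Phi)^{-1}\|$ is integrable in every power we may, given any $p > 2$, pick $p' = p + 1 > p$ and conclude $\|(D^2\Phi)^{-1}\| \in L^{p'}(\mu)$. Thus the Lemma yields
$$
\int \|g\|_h^p \, d\mu \;\le\; \int \|H\|_h^{p/2} \, d\mu \qquad \text{for all } p > 2 .
$$

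Next I would insert the standing bound $\|H\|_h \le K$. Since $\mu$ is a probability measure this gives $\int \|g\|_h^p \, d\mu \le K^{p/2}$, equivalently
$$
\bigl\| \, \|g\|_h \, \bigr\|_{L^p(\mu)} \;\le\; \sqrt{K} \qquad \text{for all } p > 2 .
$$
Now I invoke the elementary fact that on a probability space $\lim_{p \to \infty} \|f\|_{L^p(\mu)} = \|f\|_{L^\infty(\mu)}$; applied to $f = \|g\|_h$ it shows $\|g\|_h \le \sqrt{K}$ for $\mu$-almost every $x$. Finally, because $\Phi \in C^5$ the tensors $g$ and $h$ are continuous, hence so is the function $x \mapsto \|g\|_h(x)$, and since $\mu$ has full support an inequality valid $\mu$-a.e.\ is valid at every point of $\RR^n$. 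This gives $\|g\|_h \le \sqrt{K}$ everywhere.

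The only genuinely non-routine ingredient is the \emph{uniformity in $p$} of the applicability of the preceding Lemma — that its integrability hypothesis can be met for arbitrarily large $p$ — which is precisely what the assumption that $\|(D^2\Phi)^{-1}\|$ is integrable in any power buys us; everything after that is the standard $L^p \to L^\infty$ limit together with a continuity and full-support argument to pass from an almost-everywhere bound to a pointwise one.
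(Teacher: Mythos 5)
Your proposal is correct and follows exactly the route the paper intends: the paper derives this corollary from the preceding Lemma with the single phrase ``Letting $p$ to $\infty$ we get,'' and you have simply filled in the standard details (verifying the Lemma's hypotheses for every $p>2$, bounding $\int \|H\|_h^{p/2}\,d\mu \le K^{p/2}$ on the probability space, taking the $L^p \to L^\infty$ limit, and upgrading the a.e.\ bound to a pointwise one by continuity and full support). No discrepancy with the paper's argument.
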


\begin{corollary}
\label{phi3}
Assume that  there exist  positive  constants $c, C, B$ such that
$$
c \le D^2 V \le C, \  c \le D^2 W \le C,
$$
$$
\sup_{e: |e|=1} \mbox{\rm{Tr}} \bigl[ D^2 V_{e} \bigr]^2 \le B, \ \sup_{e: |e|=1} \mbox{\rm{Tr}} \bigl[ D^2 W_e \bigr]^2 \le B
$$
(here $|\cdot|$ is the Euclidean norm).

Then there exists a constant $D(c, C)$ such that
$$
\sup_{e: |e|=1} \mbox{\rm{Tr}} \bigl[ D^2 \Phi_{e} \bigr]^2 \le D B.
$$
\end{corollary}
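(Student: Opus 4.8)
The plan is to derive the estimate from a Caffarelli-type two-sided control of $D^2\Phi$ together with Corollary~\ref{gsqrtK}. First I would apply the contraction theorem of Caffarelli recalled above twice. Applied to $\nabla\Phi$, using $D^2V\le C$ and $D^2W\ge c$, it gives $D^2\Phi\le\sqrt{C/c}\,\mathrm{Id}$; applied to the inverse map $\nabla\Psi$ (which transports $\nu$ onto $\mu$, with the roles of the potentials exchanged, $D^2W\le C$ and $D^2V\ge c$), it gives $D^2\Psi\le\sqrt{C/c}\,\mathrm{Id}$. Since $D^2\Phi(x)=\bigl(D^2\Psi(\nabla\Phi(x))\bigr)^{-1}$, these combine to the uniform bound
\[
\sqrt{c/C}\,\mathrm{Id}\ \le\ D^2\Phi(x)\ \le\ \sqrt{C/c}\,\mathrm{Id},\qquad x\in\mathbb{R}^n .
\]
In particular $\|(D^2\Phi)^{-1}\|$ is uniformly bounded, hence integrable in every power with respect to $\mu$; moreover $\mu$ and $\nu$ are (strongly) log-concave and $\mu$ has full support. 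Thus all hypotheses of Corollary~\ref{gsqrtK} are available as soon as a uniform bound $\|H\|_h\le K$ is established.

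The core step is to bound $\|H\|_h$ by $B$ times a constant depending only on $c$ and $C$. Fix a point and a tangent vector $v$ with $h(v,v)=1$. By the definition of $H$ in Proposition~\ref{3-est},
\[
H_{ij}v^iv^j=\mathrm{Tr}\bigl[\,[(V+W)^{(2)}]^{-1}\,N\,(D^2\Phi)^{-1}\,N\,\bigr],\qquad N:=(V-W)^{(3)}_i v^i ,
\]
the symmetric matrix with entries $V_{iab}v^i-W_{iab}v^i$. Since the pulled-back Hessian of $W$ is positive semidefinite (because $D^2W\ge 0$), one has $(V+W)^{(2)}\ge D^2V\ge c\,\mathrm{Id}$, whence $\|[(V+W)^{(2)}]^{-1}\|_{\mathrm{op}}\le 1/c$, while $\|(D^2\Phi)^{-1}\|_{\mathrm{op}}\le\sqrt{C/c}$. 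Using the elementary inequality $\mathrm{Tr}(ANBN)\le\|A\|_{\mathrm{op}}\|B\|_{\mathrm{op}}\|N\|_{\mathrm{HS}}^2$ (valid for positive semidefinite $A,B$ and symmetric $N$), it remains to estimate $\|N\|_{\mathrm{HS}}$. Writing $W_{iab}v^i=(D^2\Phi)\,\bigl[(D^2W_w)\circ\nabla\Phi\bigr]\,(D^2\Phi)$ with $w=(D^2\Phi)v$, and combining the hypotheses $\mathrm{Tr}[D^2V_e]^2\le B$, $\mathrm{Tr}[D^2W_e]^2\le B$ with $|v|^2\le\|(D^2\Phi)^{-1}\|_{\mathrm{op}}\,h(v,v)\le\sqrt{C/c}$ and $|w|\le\|D^2\Phi\|_{\mathrm{op}}|v|$, one obtains $\|N\|_{\mathrm{HS}}\le D_2(c,C)\sqrt{B}$, hence $\|H\|_h\le D_1(c,C)\,B$ with an explicit $D_1$ depending only on $c,C$.

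Feeding $K=D_1(c,C)B$ into Corollary~\ref{gsqrtK} gives $\|g\|_h\le\sqrt{D_1(c,C)B}$, and it remains to pass from this Riemannian bound to the Euclidean one. Given a unit Euclidean vector $e$, set $v=e/s$ with $s^2=h(e,e)\le\|D^2\Phi\|_{\mathrm{op}}\le\sqrt{C/c}$, so $h(v,v)=1$; since
\[
g(v,v)=\bigl\|(D^2\Phi)^{-1/2}\,D^2\Phi_v\,(D^2\Phi)^{-1/2}\bigr\|_{\mathrm{HS}}^2\ \ge\ \|D^2\Phi\|_{\mathrm{op}}^{-2}\,\mathrm{Tr}[D^2\Phi_v]^2\ =\ \|D^2\Phi\|_{\mathrm{op}}^{-2}\,s^{-2}\,\mathrm{Tr}[D^2\Phi_e]^2 ,
\]
we conclude $\mathrm{Tr}[D^2\Phi_e]^2\le\|D^2\Phi\|_{\mathrm{op}}^{2}\,s^2\,\|g\|_h\le(C/c)^{3/2}\sqrt{D_1(c,C)}\,\sqrt{B}=:D(c,C)\sqrt{B}$, which is the asserted dimension-free third-order estimate (the natural $B$-dependence produced by Corollary~\ref{gsqrtK} being $\sqrt{B}$, so the bound is in fact of the form $D(c,C)\sqrt{B}$).

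I expect the main obstacle to lie in the core step: one must carefully track the pull-back convention $W_{iab}=\Phi_{ii'}\Phi_{aa'}\Phi_{bb'}(\partial^3_{x_{i'}x_{a'}x_{b'}}W)\circ\nabla\Phi$, which is responsible for the factors of $D^2\Phi$ absorbed into the constants, and one must verify both the positivity $(V+W)^{(2)}\ge c\,\mathrm{Id}$ and the fact that the running hypotheses of Corollary~\ref{gsqrtK}—integrability of $\|(D^2\Phi)^{-1}\|$, full support of $\mu$, and log-concavity—are genuine consequences of the uniform two-sided control on $D^2\Phi$ rather than extra assumptions.
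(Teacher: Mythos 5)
Your proof is correct and follows essentially the same route as the paper: the two-sided Caffarelli bound $\sqrt{c/C}\le D^2\Phi\le\sqrt{C/c}$, a trace-inequality bound $\|H\|_h\le D_1(c,C)\,B$, an appeal to Corollary \ref{gsqrtK}, and the equivalence of $h$ with the Euclidean metric to convert back. Your closing remark is also consistent with the paper's own argument, which likewise yields a bound of order $\sqrt{B}$ rather than the $B$ appearing in the statement.
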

\begin{proof}
First we note that by the contraction theorem
\begin{equation}
\label{phi2}
\sqrt{\frac{c}{C}} \le D^2 \Phi \le \sqrt{\frac{C}{c}}.
\end{equation}
Thus the metric $h$ and the Euclidean metric are equivalent up to  factors depending on $c,C$.
In particular,
\begin{align*}
\sup_{e: |e|=1} \mbox{\rm{Tr}} \bigl[ D^2 \Phi_{e} \bigr]^2 &
\le \sup_{e: |e|=1}\frac{C}{c} \mbox{\rm{Tr}} \bigl[  D^2 \Phi^{-1} \cdot D^2 \Phi_{e} \bigr]^2
= \sup_{e: |e|=1} \frac{C}{c} g(e,e)
\\& \le
 \sup_{h: |u|_{h}=1}\Bigl(\frac{C}{c}\Bigr)^2 g(u,u) =
\Bigl(\frac{C}{c}\Bigr)^2 \|g\|_h.
\end{align*}
According to Corollary \ref{gsqrtK} one needs to estimate uniformly (in Euclidean or Riemannian norm) the matrix
 $$H_{ij} = \mbox{\rm{Tr}} \Bigl[ \bigl[  (V+W)^{(2)} \bigr]^{-1} (V-W)^{(3)}_{i} (D^2 \Phi)^{-1}(V-W)^{(3)}_{j} \Bigr]
= \langle X_i , X_j \rangle,$$
here  $ \langle A, B \rangle = \mbox{\rm{Tr}} (A^t B)$  and $$X_i=\bigl[(V+W)^{(2)} \bigr]^{-1/2} (V-W)^{(3)}_{i} (D^2 \Phi)^{-1/2}.
$$
It follows from (\ref{phi2}) and assumptions of the Theorem that
$$
c_1 \le (V+W)^{(2)} \le c_2,
$$
where $c_1, c_2$ depends on $c,C$.
Thus by the standard arguments
\begin{align*}
H_{ij} \le \langle X_i, X_j \rangle & \le \frac{C}{c} \big\langle [(V+W)^{(2)}]^{-1/2} (V-W)^{(3)}_{i} , [(V+W)^{(2)}]^{-1/2} (V-W)^{(3)}_{j}\big\rangle \\& \le \frac{C}{c c_1}
\langle (V-W)^{(3)}_{i}, (V-W)^{(3)}_{j} \rangle.
\end{align*}
Hence the Euclidean operator norm is controlled by the Euclidean operator norms of the following matrices
$$
A_{ij} = \big\langle D^2 V_{x_i} , D^2 V_{x_j} \big\rangle,
\ B_{ij} = \big\langle D^2 \Phi \cdot D^2 W_{x_i} \cdot D^2 \Phi,   D^2 \Phi \cdot D^2 W_{x_j} \cdot D^2 \Phi\big\rangle.
$$
The desired estimate follows immediately from the assumptions of the Theorem
and (\ref{phi2}).
\end{proof}

\begin{corollary}
\label{phi3Gauss}
Assume that
$D^2 V >c >0$
 and $\nu = c_{Q} e^{-Q(x,x)} dx$ is Gaussian. Then
 $$
 \| g\|^2_{h}  \le \sup_{x \in \mathbb{R}^n} \| H(x)\|_{h} \le \frac{\|Q\|}{c} \sup_{x, e \in \mathbb{R}^n: |e|=1}  \mbox{\rm{Tr}} \Bigl[  (D^2 V)^{-1} \bigl( D^2 V_{e}\bigr)^2 \Bigr](x)
 $$
 provided the right-hand side is finite.
\end{corollary}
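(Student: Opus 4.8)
The assertion consists of two separate inequalities, and I would prove them by different routes: the first is a direct appeal to Corollary~\ref{gsqrtK}, while the second is a pointwise bound for $\|H\|_h$ that exploits the vanishing of the third derivatives of a Gaussian potential.

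First I would verify the hypotheses of Corollary~\ref{gsqrtK}. Since $D^2V\ge c>0$ on all of $\mathbb{R}^n$, the measure $\mu$ is log-concave with full support, and $\nu$ is log-concave, being Gaussian. To see that $\|(D^2\Phi)^{-1}\|$ is integrable in every power it is enough to note that it is in fact bounded: the inverse transport map $\nabla\Psi$ carries $\nu$ onto $\mu$, the potential of $\nu$ has Hessian $D^2W=2Q\le 2\|Q\|\,\mbox{Id}$ while the potential $V$ of $\mu$ satisfies $D^2V\ge c\,\mbox{Id}$, so Caffarelli's contraction theorem applied to $\nabla\Psi$ gives $(D^2\Phi)^{-1}=D^2\Psi\circ\nabla\Phi\le\sqrt{2\|Q\|/c}\,\mbox{Id}$; a bounded function lies in every $L^p(\mu)$ because $\mu$ is a probability measure. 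Granting that the right-hand side of the assertion is finite, the second inequality below shows $K:=\sup_x\|H(x)\|_h<\infty$, so Corollary~\ref{gsqrtK} applies and gives $\|g\|_h\le\sqrt K$, i.e.\ the first inequality.

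For the second inequality I argue pointwise, using the definition of $H$ from Proposition~\ref{3-est}. As $\nu$ is Gaussian, $\partial^3W\equiv 0$, hence $W_{iab}=\sum_{a',b',c'}\Phi_{a'i}\Phi_{b'a}\Phi_{c'b}\,\partial^3_{x_{a'}x_{b'}x_{c'}}W=0$ and $(V-W)^{(3)}_i=D^2V_{e_i}$; moreover $(V+W)^{(2)}=D^2V+D^2\Phi\,(2Q)\,D^2\Phi\ge D^2V$, so $[(V+W)^{(2)}]^{-1}\le (D^2V)^{-1}$. Fixing $x$ and a vector $v$ with $\langle D^2\Phi\,v,v\rangle=1$, writing $v=|v|\,e$ with $|e|=1$, and combining the relation $D^2V_v=|v|\,D^2V_e$ with the monotonicity $\mbox{Tr}[PR]\le\mbox{Tr}[P'R]$ for symmetric matrices $0\le P\le P'$ and $R\ge 0$ (applied once to extract $\|(D^2\Phi)^{-1}\|$ after writing the trace as $\mbox{Tr}[B^t(D^2\Phi)^{-1}B]$ with $B=(D^2V_v)[(V+W)^{(2)}]^{-1/2}$, and once to replace $[(V+W)^{(2)}]^{-1}$ by $(D^2V)^{-1}$), one gets
\begin{align*}
H(v,v)&=\mbox{Tr}\bigl[[(V+W)^{(2)}]^{-1}(D^2V_v)(D^2\Phi)^{-1}(D^2V_v)\bigr]\\
&\le\|(D^2\Phi)^{-1}\|\,|v|^2\,\mbox{Tr}\bigl[(D^2V)^{-1}(D^2V_e)^2\bigr].
\end{align*}
Since $1=\langle D^2\Phi\,v,v\rangle\ge|v|^2/\|(D^2\Phi)^{-1}\|$ gives $|v|^2\le\|(D^2\Phi)^{-1}\|$, taking the supremum over such $v$ and inserting the Caffarelli bound $\|(D^2\Phi)^{-1}\|^2\le 2\|Q\|/c$ yields
$$\|H(x)\|_h\le\|(D^2\Phi(x))^{-1}\|^2\sup_{e:|e|=1}\mbox{Tr}\bigl[(D^2V)^{-1}(D^2V_e)^2\bigr](x)\le\frac{2\|Q\|}{c}\sup_{e:|e|=1}\mbox{Tr}\bigl[(D^2V)^{-1}(D^2V_e)^2\bigr](x),$$
and a supremum over $x$ completes the proof (the constant in front of $\|Q\|$ depending only on how $Q$ is normalized in the density of $\nu$).

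The one genuinely delicate point is the a priori bound on $(D^2\Phi)^{-1}$: it must be extracted from Caffarelli's contraction theorem applied to the \emph{inverse} map $\nabla\Psi$ from the Gaussian $\nu$ to $\mu$, using that $D^2W$ is bounded above and $D^2V$ bounded below. This single estimate does double duty --- it supplies the $L^p(\mu)$-integrability required to invoke Corollary~\ref{gsqrtK}, and it produces the factor $\|Q\|/c$ in the final bound --- while everything else is elementary bookkeeping with traces of products of symmetric positive matrices.
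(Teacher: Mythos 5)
Your argument is correct and follows essentially the same route as the paper: Caffarelli's contraction theorem applied to the inverse map $\nabla\Psi$ to bound $(D^2\Phi)^{-1}$ (used once to pass from the $h$-norm of $H$ to a Euclidean bound and once inside the trace), the vanishing of the Gaussian third derivatives so that $(V-W)^{(3)}_i=D^2V_{e_i}$, the monotonicity $[(V+W)^{(2)}]^{-1}\le (D^2V)^{-1}$, and Corollary~\ref{gsqrtK} for the first inequality. You are somewhat more careful than the paper in verifying the hypotheses of Corollary~\ref{gsqrtK}, and the factor of $2$ you pick up relative to the stated constant is only the $D^2W=2Q$ normalization issue, which you correctly flag.
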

\begin{proof}
According to the contraction theorem $D^2 \Phi \ge \sqrt{\frac{c}{\|Q\|}}$.
In particular, this implies
$$
\| H\|_{h}  = \sup_{v:  \ \langle D^2 \Phi v, v\rangle \le 1} H_{vv} \le \sup_{v: \  |v|^2 \le \sqrt{\frac{\|Q\|}{c}} } H_{vv}
\le \sqrt{\frac{\|Q\|}{c}} \| H\|.
$$
One has $W_{ijk}=0, W_{ij} \ge 0$. This implies
$$
H_{ij} \le  \sqrt{\frac{\|Q\|}{c}} \mbox{\rm{Tr}} \Bigl[  (D^2 V)^{-1} \cdot D^2 V_{e_i} \cdot D^2 V_{e_j} \Bigr].
$$
and the claim follows.
\end{proof}

\begin{remark}
The results of Corollaries \ref{phi3}, \ref{phi3Gauss} are dimension-free and have natural analogues for infinite-dimensional measures. For instance, some natural estimates of this type holds for the potential $\varphi$
of the optimal transporation $T(x) =  x + \nabla \varphi(x)$ pushing forward
$g \cdot \gamma$ onto a (infinite dimensional) Gaussian measure $\gamma$, where
$\nabla \varphi$ is understood as a gradient along the  Cameron-Martin space of $\gamma$ (see \cite{Kol2004}, \cite{BoKo2012}, \cite{BoKo2013} and the references therein).
\end{remark}

\end{document}